\documentclass[egregdoesnotlikesansseriftitles]{scrartcl}

\usepackage[british]{babel}
\usepackage[utf8]{inputenc}
\usepackage[T1]{fontenc}
\usepackage[cal=boondox,bb=boondox]{mathalfa}
\usepackage{euler}
\usepackage{amsmath}
\usepackage{amssymb}
\usepackage{amsthm}
\usepackage{palatino}
\usepackage{enumitem}
\usepackage{etoolbox}
\usepackage{tikz-cd}
\usetikzlibrary{decorations.pathmorphing}
\usepackage{dsfont}
\usepackage{calc}
\usepackage{mathtools}
\usepackage{xstring}
\usepackage{pict2e,picture} % Used for bbDelta
\usepackage[style=alphabetic,backend=biber,opcittracker=true]{biblatex}
\DeclareFieldFormat{eprint:eudml}{%
  \href{http://eudml.org/doc/#1}{EuDML : #1}%
}
\DeclareFieldFormat{eprint:jstor}{%
  \href{http://www.jstor.org/stable/#1}{JSTOR : #1}%
}
\DeclareFieldFormat{eprint:euclid}{%
  \href{http://projecteuclid.org/euclid.cmp/#1}{Project Euclid : #1}%
}
\addbibresource{Biblio_Biblatex.bib}
\usepackage{hyperref}
\usepackage[nameinlink]{cleveref}

% kpfonts fraktur
\DeclareMathAlphabet{\mathfrak}{U}{jkpmia}{m}{it}
\SetMathAlphabet{\mathfrak}{bold}{U}{jkpmia}{bx}{it}

% To enumerate descriptions
\newcounter{descriptcount}

\newlist{enumdescript}{description}{1}
\setlist[enumdescript,1]{%
  before={\setcounter{descriptcount}{0}%
          \renewcommand*\thedescriptcount{\arabic{descriptcount}}},
        font={\bfseries\stepcounter{descriptcount}\thedescriptcount.~}
}

\crefname{paragraph}{paragraph}{paragraphs}
\Crefname{Paragraph}{Paragraph}{Paragraphs}
\crefname{subsubsubappendix}{subsubsection}{subsubsections}
\Crefname{subsubsubappendix}{Subsubsection}{Subsubsections}
\crefname{subsubsubsubappendix}{paragraph}{paragraphs}
\Crefname{subsubsubsubappendix}{Paragraph}{Paragraphs}

\theoremstyle{definition}
\newtheorem{definition}[subsubsection]{\definitionautorefname}
\newcommand{\definitionautorefname}{Definition}

\newtheorem{construct}[subsubsection]{\constructautorefname}
\newcommand{\constructautorefname}{Construction}

\theoremstyle{remark}
\newtheorem{remark}[subsubsection]{\remarkautorefname}
\newcommand{\remarkautorefname}{Remark}

\newtheorem{exmp}[subsubsection]{\exmpautorefname}
\newcommand{\exmpautorefname}{Example}

\newcommand{\warningautorefname}{Warning}

\newtheorem{notation}[subsubsection]{\notationautorefname}
\newcommand{\notationautorefname}{Notation}

\theoremstyle{plain} \newtheorem{thm}[subsubsection]{\thmautorefname}
\newcommand{\thmautorefname}{Theorem}

\newtheorem{corlr}[subsubsection]{\corlrautorefname}
\newcommand{\corlrautorefname}{Corollary}

\newcommand{\prorautorefname}{Property}

\newtheorem{pros}[subsubsection]{\prosautorefname}
\newcommand{\prosautorefname}{Proposition}

\newtheorem{lemma}[subsubsection]{\lemmaautorefname}
\newcommand{\lemmaautorefname}{Lemma}

\newtheorem{scholium}[subsubsection]{\scholiumautorefname}
\newcommand{\scholiumautorefname}{Scholium}

% Widecheck
\makeatletter
\DeclareRobustCommand\widecheck[1]{{\mathpalette\@widecheck{#1}}}
\def\@widecheck#1#2{%
  \setbox\z@\hbox{\m@th$#1#2$}%
  \setbox\tw@\hbox{\m@th$#1%
    \widehat{%
      \vrule\@width\z@\@height\ht\z@
      \vrule\@height\z@\@width\wd\z@}$}%
  \dp\tw@-\ht\z@
  \@tempdima\ht\z@ \advance\@tempdima2\ht\tw@ \divide\@tempdima\thr@@
  \setbox\tw@\hbox{%
    \raise\@tempdima\hbox{\scalebox{1}[-1]{\lower\@tempdima\box
        \tw@}}}%
  {\ooalign{\box\tw@ \cr \box\z@}}}
\makeatother

\def\slashedrightarrow{\relbar\joinrel\mapstochar\joinrel\rightarrow}

% Blackboard Delta
\makeatletter
\DeclareRobustCommand{\bbDelta}{{\mathpalette\bb@Delta\relax}}
\newcommand{\bb@Delta}[2]{%
  \begingroup
  \sbox\z@{$\m@th#1\Delta$}%
  \dimendef\Dht=6 \dimendef\Dwd=8
  \setlength{\Dwd}{\wd\z@}%
  \setlength{\Dht}{\ht\z@}%
  \begin{picture}(\Dwd,\Dht)
  \put(0,0){$\m@th#1\Delta$}
  \put(.42\Dwd,.7\Dht){\line(10,-26){.25\Dht}}
  \end{picture}%
  \endgroup
}
\makeatother
% Blackboard Gamma
\makeatletter
\DeclareRobustCommand{\bbGamma}{{\mathpalette\bb@Gamma\relax}}
\newcommand{\bb@Gamma}[2]{%
  \begingroup
  \sbox\z@{$\m@th#1\Gamma$}%
  \dimendef\Dht=6 \dimendef\Dwd=8
  \setlength{\Dwd}{\wd\z@}%
  \setlength{\Dht}{\ht\z@}%
  \begin{picture}(\Dwd,\Dht)
  \put(0,0){$\m@th#1\Gamma$}
  \put(.47\Dwd,.025\Dht){\line(0,1){.95\Dht}}
  \end{picture}%
  \endgroup
}
\makeatother
% Blackboard Omega
\makeatletter
\DeclareRobustCommand{\bbOmega}{{\mathpalette\bb@Omega\relax}}
\newcommand{\bb@Omega}[2]{%
  \begingroup
  \sbox\z@{$\m@th#1\Omega$}%
  \dimendef\Dht=6 \dimendef\Dwd=8
  \setlength{\Dwd}{\wd\z@}%
  \setlength{\Dht}{\ht\z@}%
  \begin{picture}(\Dwd,\Dht)
  \put(0,0){$\m@th#1\Omega$}
  \put(.2925\Dwd,.167\Dht){\line(0,1){.72\Dht}}
  \put(.6825\Dwd,.17\Dht){\line(0,1){.72\Dht}}
  \end{picture}%
  \endgroup
}
\makeatother
% Blackboard Upsilon
\makeatletter
\DeclareRobustCommand{\bbUpsilon}{{\mathpalette\bb@Upsilon\relax}}
\newcommand{\bb@Upsilon}[2]{%
  \begingroup
  \sbox\z@{$\m@th#1\Omega$}%
  \dimendef\Dht=6 \dimendef\Dwd=8
  \setlength{\Dwd}{\wd\z@}%
  \setlength{\Dht}{\ht\z@}%
  \begin{picture}(\Dwd,\Dht)
  \put(0,0){$\m@th#1\Upsilon$}
  \put(.5125\Dwd,.033\Dht){\line(0,1){.8\Dht}}
  \end{picture}%
  \endgroup
}
\makeatother

\newcommand\mapstofonc{\mathrel{\ooalign{$\rightsquigarrow$\cr%
  \kern-.105ex\raise.325ex\hbox{\scalebox{1}[0.388]{$\mid$}}\cr}}}

\makeatletter
\newcommand{\addchar}[2]{%
  \@tfor\letter:=#1\do{%
    \letter#2
  }%
}
\makeatother

\DeclareMathOperator{\id}{id}%
\newcommand{\cat}[1]{\mathfrak{#1}}
\newcommand{\op}[1]{{#1}^{\mathrm{op}}}
\newcommand{\func}[1]{\mathcal{\addchar{#1}{\!}}\,}
\newcommand{\shname}[1]{\StrSplit{#1}{1}{\debnom}{\resnom}\mathit{\mathscr{\debnom}\resnom}}%
\newcommand{\yo}{\func{y}}% {よ}

\newcommand{\lmtimes}{\mathop{\times}\limits}

\newcommand{\grothco}[1][{}]{\int^{\mathrm{co}}_{#1}}%
\newcommand{\infgrpds}{\cat{\infty\textnormal{-}Grpd}}
\newcommand{\infcats}{\cat{\mathnormal{(\infty,1)}\textnormal{-}Cat}}
\newcommand{\infoprds}{\cat{\infty\textnormal{-}Oprd}}
\newcommand{\infmoncats}{\cat{Mon\infcats}}
\newcommand{\funcs}[2]{\cat{#2}^{\cat{#1}}}
\newcommand{\prshvs}[1]{\funcs{\op{#1}}{\infgrpds}}
\newcommand{\laxext}[1]{\operatorname{Lex}_{#1}}
\newcommand{\oplaxext}[1]{\operatorname{Opex}_{#1}}
\newcommand{\wtdlim}[2]{\left\{#1,#2\right\}}
\newcommand{\wtdcolim}[2]{#1\star{}#2}%
\newcommand{\seg}[1]{\cat{Seg}_{#1}}
\newcommand{\wkfibs}[1]{\cat{WkSegFib}_{/#1}}
\newcommand{\corol}[1]{\star_{#1}}
\newcommand{\edge}{\eta}
\newcommand{\zeroelem}{\emptyset}

\DeclareMathOperator{\nerve}{\func{N}\!\!_{\bullet}}
\DeclareMathOperator*{\colim}{colim}

\DeclarePairedDelimiter{\uly}{\lvert}{\rvert}
\DeclarePairedDelimiter{\restr}{{}}{\rvert}
\DeclarePairedDelimiter{\bbracks}{[\![}{]\!]}

\title{Monoidal envelopes and Grothendieck construction for dendroidal
  Segal objects} \author{David Kern}

\begin{document}

\maketitle{}

\begin{abstract}
  We propose a construction of the monoidal envelope of
  $\infty$-operads in the model of Segal dendroidal spaces, and use it
  to define cocartesian fibrations of such. We achieve this by viewing
  the dendroidal category as a ``plus construction'' of the category
  of pointed finite sets, and work in the more general language of
  algebraic patterns for Segal conditions. Finally, we rephrase
  Lurie's definition of cartesian structures as exhibiting the
  categorical fibrations coming from envelopes, and deduce a
  straightening/unstraightening equivalence for dendroidal spaces.
\end{abstract}

\tableofcontents{}

\section{Introduction}
\label{sec:introduction}

Any monoidal category $\cat{V}^{\otimes}$ defines an operad whose
colours are the objects of $\cat{V}$ and whose multimorphisms
$C_{1},\cdots,C_{n}\to D$ are given by the morphisms
$C_{1}\otimes\cdots\otimes C_{n}\to D$. The operads which arise in
this way are said to be representable; to avoid confusion --- since we
shall model operads as certain presheaves --- we will call them
\emph{representably monoidal}. Indeed, the tautological multimorphism
$C_{1},\cdots,C_{n}\to C_{1}\otimes\cdots\otimes C_{n}$ (corresponding
to $\id_{C_{1}\otimes\cdots\otimes C_{n}}$) carries the universal
property that every multimorphism with source $(C_{1},\cdots,C_{n})$
must factor through it, by a unique unary morphism of source
$C_{1}\otimes\cdots\otimes C_{n}$. This universal property can be seen
as a cocartesianity condition.

Recall indeed that, if $\func{p}\colon\cat{E}\to\cat{B}$ is an
opfibration (or cocartesian fibration), there is a factorisation
system on $\cat{E}$ whose left class consists of
$\func{p}$-cocartesian morphisms and whose right class consists of
purely $\func{p}$-vertical morphisms. One can also define a notion of
opfibration of multicategories, as done for example
in~\cite{hermida04:_fibrat} for generalised multicategories, and more
generally of cocartesian (multi)morphisms therein, so that a
multicategory is representably monoidal if and only if its morphism to
the terminal operad is an opfibration. In general, an opfibration of
multicagories can be thought of as a morphism whose fibres are
monoidal categories: since the selection of a colour comes from the
operad generated by the nodeless edge $\edge$, which only has unary
morphisms, the vertical arrows are always unary ones, while the
cocartesian arrows are those exhibiting tensor products.

In the homotopical setting, there are various ways of modelling
$\infty$-operads, each coming with its advantages and drawbacks. The
model preferred by~\cite{lurie17:_higher_algeb}, which represents an
$\infty$-operad by its $\infty$-category of operators, is biased to
make cocartesian fibrations and monoidal envelopes immediately
accessible, but requires entangling combinatorics to recover an
operadic intuition on any construction. On the other hand, the
dendroidal models, based on categories of trees as the shapes for
$\infty$-operads, is closer to the diagramatic operadic intuition, but
generally requires more work to make any construction. For exemple,
cocartesian fibrations and their straightening were studied
in~\cite{heuts11:_algeb} in the model of dendroidal sets, a point-set
model which is not manifestly model-invariant.

A more homotopically robust model is that of Segal dendroidal spaces,
or more generally Segal dendroidal objects in any complete
$(\infty,1)$-category. This model makes it clear that all that is
needed to model algebraic objects such as operads is a general
category of ``shapes'' and their Segal decompositions. This philosophy
was realised in~\cite{chu21:_homot_segal} which defines a notion of
algebraic pattern, a base shape category over which can be defined
Segal presheaves.

Thus one would like to define a notion of cocartesian operations and
fibrations for Segal objects over algebraic patterns, and in addition
construct the free fibration generated by an arbitrary morphism. Here,
we will not achieve this generality, but take a simpler route to
defining opfibrations. In~\cite{hermida04:_fibrat}, opfibrations of
multicategories were characterised as the morphisms whose image under
the functor taking monoidal envelopes is an opfibration (or
cocartesian fibration) of underlying categories. We will follow this
idea, and focus on constructing an envelope $\infty$-functor and
exploiting it to define cocartesian fibrations.

\subsection*{The direction problem and the plus construction}

To perform the above requires a notion of direction for the operations
of Segal objects: in categories, there are two directions for
cartesianity, from the source or from the target (giving rise to
cartesian and cocartesian morphisms), while for operads or their
``many-objects'' generalisation, virtual double categories, the notion
of (co)cartesian morphisms which has so far been explored looks from
the target (as in~\cite[Definition 5.1]{cruttwell10}), but we expect
that any choice of input to separate may give a notion of
cartesianity. Not all choices of direction, however, are as good as
others: for operads, the choice of the output to keep apart leads to
functoriality for composition, while the other choices do not (due to
the absence of a duality operation as for categories). As a
consequence, one may need to restrict the study to ``good''
orientations; this seems likely to be impossible to find for modular
operads.

In this work, we have elected to eschew the problem by replacing a
perhaps more canonical definition of directibility by a more practical
one. The presence of a notion of direction for operations means that
we can think of any composite of them as having an order of
progression. This is what is captured by the objects of the simplex
category $\bbDelta$, chains of morphisms going from a beginning to an
end. Hence we will base our notion of direction on this category,
declaring an algebraic pattern to be \emph{well-directed} if it can be
written as the output of a certain construction involving
$\bbDelta$. The appropriate construction to consider turns out to be a
variant of the plus construction suggested by Baez--Dolan and studied
by, among others, \cite{barwick18:_from} and~\cite{berger22:_momen}.

The plus construction of a pattern $\cat{O}$ is characterised by the
fact that its Segal objects are the weak Segal $\cat{O}$-fibrations,
or ``$\cat{O}$-operads''. In this language, envelopes for weak Segal
fibrations of patterns have been recently constructed
in~\cite{barkan22:_envel_algeb_patter}, building on the construction
of~\cite{lurie17:_higher_algeb} and the properties exhibited
in~\cite{haugseng21}. While in a context equivalent to ours, this
construction has orthogonal goals to the one performed here, as we
wish to always remain in the explicit setting of Segal objects.

\subsection*{Idea of the construction}
\label{sec:idea-construction}

In order to understand the idea of our construction of the monoidal
envelope functor, we describe it here in the case of the pattern
$\op{\bbGamma}$, whose Segal objects are the commutative monoids and
whose plus construction parameterises operads.

Let $\mathcal{O}$ be an operad. Its monoidal envelope is constructed
as a monoidal category $\func{Env}(\mathcal{O})$ which has as set of
objects the free monoid generated by the colours of $\mathcal{O}$,
whose elements are denoted as $C_{1}\otimes\cdots\otimes C_{n}$ or
simply $C_{1}\cdots C_{n}$. If $C_{1}\otimes\cdots\otimes C_{n}$ is
such a string of colours of $\mathcal{O}$ and $D$ is one colour, a
morphism $C_{1}\otimes\cdots\otimes C_{n}\to D$ is given by a
multimorphism $C_{1},\dots,C_{n}\to D$ in $\mathcal{O}$. If
$C_{1}\otimes\cdots\otimes C_{n}$ and
$D_{1}\otimes\cdots\otimes D_{m}$ are two such strings of colours of
$\mathcal{O}$, to defines a morphism
$C_{1}\otimes\cdots\otimes C_{n}\to D_{1}\otimes\cdots\otimes D_{m}$
one needs to further select of partition of the inputs
$(C_{1},\dots, C_{n})$ into $m$ (possibly empty) parts.

The above is so far just a description of the underlying category
$\cat{Env}(\mathcal{O})$ of the envelope of $\mathcal{O}$; to define
it as a monoidal category, or representably monoidal operad, one must
also define multimorphisms of higher arity in the operadic
structure. Let
$(C^{i}_{1}\otimes\cdots\otimes C^{i}_{p_{i}})_{i\in\bbracks{1,n}}$ be
$n$ colours of $\func{Env}(\mathcal{O})$ and let
$D_{1}\otimes\cdots\otimes D_{m}$ be a further colour. By the
representability condition, a multimorphism
$(C^{1}_{1}\cdots C^{1}_{p_{1}},\dots,C^{n}_{1}\cdots
C^{n}_{p_{n}})\to D_{1}\otimes\cdots\otimes D_{m}$ is given by a
morphism
$C^{1}_{1}\otimes\cdots\otimes C^{n}_{p_{n}}\to
D_{1}\otimes\cdots\otimes D_{m}$, that is a partition of the entries
and a collection of multimorphisms to each $D_{i}$.

In the dendroidal model, one simply defines the object
$\func{Env}(\mathcal{O})(\corol{n})$ of all $n$-ary morphisms without
specifying their sources and targets. To describe this, it becomes
useful to reverse the thinking: taking a family of multimorphisms of
$\mathcal{O}$, we can ask how to interpret it as a multimorphism in
$\func{Env}(\mathcal{O})$. If $C_{1},\cdots,C_{r}$ is the union of the
domains of the multimorphisms in the family considered, the
decomposition in family provides a partition of $p$ indexed by the
targets; however, from the point of view of $\func{Env}(\mathcal{O})$,
this partition is completely artificial as it is only used to
construct a morphism whose target may consist of several colours. Thus
it must be forgotten. Meanwhile, if the family is to be interpreted as
a multimorphism of specified arity $n$ in $\func{Env}(\mathcal{O})$,
the set of colours $(C_{i})_{i\in\bbracks{1,r}}$ must be endowed with
a partition into $n$ parts.

In a formula, we have that
\begin{equation}
  \label{eq:2}
  \func{Env}(\mathcal{O})(\corol{n})
  =\coprod_{\substack{r\in\mathbb{N}\\\lambda\text{ partition of $r$
        in $n$}}}\prod_{i=1}^{m}\mathcal{O}(\corol{\lambda(i)})\text{.}
\end{equation}
Recall that partitions are the same thing as active morphisms in
$\op{\bbGamma}$ (which is where the power of the plus construction,
relating $\op{\Gamma}$ to the pattern for operads, comes into
play). We can then interpret the coproduct on partitions as a colimit
over morphisms in $(\op{\bbGamma})^{\mathrm{act}}$.

Viewed in this way, this formula is very reminiscent to the one
computing (pointwise) oplax extensions\footnote{Oft known as left Kan
  extensions}, with one difference: the colimit is taken only of the
trees whose height is that of a corolla, \emph{i.e.}  compatibly with
the projection to $\bbDelta$. To that end, the notion of extension
will need to be refined to a ``fibrewise'' one.

\subsection*{Outline of the paper}

In~\cref{sec:plus-constr-Seg-obj}, we define the plus construction for
appropriate algebraic patterns, and establish its main properties, in
a manner very similar to the study of the plus construction of
${\op{\bbGamma}}^{\natural}$ carried out in~\cite{chu18:_two}
or~\cite{chu20:_enric}. Then, in~\cref{sec:mono-envel-groth}, we will
use it to construct the ``representably monoidal'' envelope of a Segal
object. While the construction makes sense for general well-directed
patterns, we are only able to exhibit its good monoidal properties by
restricting patterns such as the one ${\op{\bbOmega}}^{\flat}$ for
operads. Finally, in~\cref{sec:cart-mono-struct-appl} we further use
this envelope functor to define a straightening/unstraightening
Grothendieck construction for $\infty$-operads; the results of this
section are not new and are simply variants of results appearing
in~\cite{lurie17:_higher_algeb}, the combinatorics of whose proofs are
in fact directly reused, but this language provides a new, more
operadic, point of view on them.

\subsection*{Acknowledgements}
\label{sec:acknowledgements}

The core results of this note were worked out as part of my PhD with
Étienne Mann, and I thank him for many useful discussions.

I acknowledge funding from the grant of the Agence Nationale de la
Recherche ``Categorification in Algebraic Geometry'' ANR-17-CE40-0014
and the European Research Council (ERC) under the European Union’s
Horizon 2020 research and innovation programme (Grant ``Derived
Symplectic Geometry and Applications'' Agreement No. 768679)

\section{Review of the language of algebraic patterns}
\label{sec:revi-plus-constr}

\subsection{Algebraic patterns and Segal objects}
\label{sec:lang-algebr-patt}

\begin{definition}[Algebraic pattern]
  \label{def:alg-pattrn}
  An \textbf{algebraic pattern} is an $(\infty,1)$-category endowed
  with a unique factorisation system and a selected class of objects called
  \textbf{elementary}. The morphisms in the left class of the
  factorisation system are called \textbf{inert}, and those in the
  right class \textbf{active}.
\end{definition}

\begin{notation}
  Inert maps in an algebraic pattern are usually denoted as
  $\rightarrowtail$, while active maps are denoted as
  $\rightsquigarrow$.
  
  If $\cat{O}$ is the underlying $(\infty,1)$-category of the
  algebraic pattern considered, one writes $\cat{O}^{\mathrm{inrt}}$
  and $\cat{O}^{\mathrm{act}}$ for the wide and locally full
  sub-$(\infty,1)$-categories whose morphisms are respectively the
  inert and the active morphisms.

  We also denote $\cat{O}^{\mathrm{el}}$ the full
  sub-$(\infty,1)$-category of $\cat{O}^{\mathrm{inrt}}$ on the
  selected elementary objects. For any object $O\in\cat{O}$, we
  further write
  $\cat{O}^{\mathrm{el}}_{O/}\coloneqq
  \cat{O}^{\mathrm{el}}\times_{\cat{O}^{\mathrm{inrt}}}\cat{O}^{\mathrm{inrt}}_{O/}$,
  the $(\infty,1)$-category of inert morphisms from $O$ to an
  elementary object.
\end{notation}

\begin{exmp}
  Segal's category $\bbGamma$, (a skeleton of) the opposite of the
  category $\cat{FinSet}_{\ast}$ of pointed finite sets, admits an
  active-inert factorisation system where a morphism of pointed finite
  sets $f\colon(S,s_{0})\to(T,t_{0})$ is \textbf{inert} if for every
  $t\in T\setminus\{t_{0}\}$, the preimage $f^{-1}(t)$ consists of
  exactly one element, and \textbf{active} if the only element of $S$
  mapped to $t_{0}$ is $s_{0}$ --- so that, in particular, the
  subcategory of active morphisms ${\op{\bbGamma}}^{\mathrm{act}}$
  identifies with the category of finite sets (by mapping $(S,s_{0})$
  to $S\setminus\{s_{0}\}$).
  
  The induced inert-active factorisation system on $\op{\bbGamma}$
  gives rise to two algebraic pattern structures, from two choices of
  elementary objects. The algebraic pattern ${\op{\bbGamma}}^{\flat}$
  has as elementaries the two-element sets (isomorphic to
  $\langle1\rangle$), while the pattern ${\op{\bbGamma}}^{\natural}$
  has as elementaries the singletons (isomorphic to $\langle0\rangle$)
  and the two-element sets.
\end{exmp}

\begin{exmp}
  The (non-augmented) simplicial indexing category $\bbDelta$,
  identified with the category of ordered non-empty finite sets and
  order-preserving maps between them, admits a factorisation system in
  which a map is \textbf{active} if it preserves the top and bottom
  elements, and \textbf{inert} if it corresponds to the inclusion of a
  linear subset.
  
  The induced inert-active factorisation system on $\op{\bbDelta}$
  gives rise to two algebraic pattern structures: the algebraic
  pattern ${\op{\bbDelta}}^{\flat}$ has as elementaries the
  two-element sets (isomorphic to $[1]$), while the pattern
  ${\op{\bbGamma}}^{\natural}$ has as elementaries the singletons
  (isomorphic to $[0]$) and the two-element sets.
\end{exmp}

\begin{exmp}
  Of particular interest to us, the dendroidal category $\bbOmega$ of
  Moerdijk--Weiss can be described as a category of (non-planar)
  rooted trees, with morphisms the morphisms of free (coloured,
  symmetric) operads generated by these trees. We shall give an
  alternate construction of (a sufficient subcategory of) it
  in~\cref{sec:plus-constr-Seg-obj}.

  Certain particularly interesting trees can be distinguished:
  \begin{description}
  \item[the free-living edge] is the tree, denoted $\edge$, consisting
    of one edge but no vertex (so the operad it freely generates has
    one colour, and only its identity unary morphism);
  \item[the corollas] are the trees, denoted $\corol{n}$ with
    $n\in\mathbb{N}$, with a single vertex and $n+1$ edges (one of
    which is the root) attached to it. Note that the corolla
    $\corol{n}$ with $n$ leaves has as automorphism group the
    symmetric group $\mathfrak{S}_{n}$.
  \end{description}

  The category $\bbOmega$ admits a factorisation system in which a
  morphism is \textbf{active} if it is boundary-preserving, and
  \textbf{inert} if it corresponds to a subtree inclusion (which, in
  particular, is valence-preserving on the vertices).

  The induced inert-active factorisation system on $\op{\bbOmega}$
  gives again rise to two algebraic pattern structures: the algebraic
  pattern ${\op{\Omega}}^{\flat}$ has as elementaries the corollas
  $\corol{n}$, while the pattern ${\op{\bbOmega}}^{\natural}$ has as
  elementaries the corollas and the free-living edge $\edge$.
\end{exmp}

\begin{definition}[Morphisms of algebraic patterns]
  \label{def:morph-alg-pattrns}
  Let $\cat{O}$ and $\cat{P}$ be algebraic patterns. A
  \textbf{morphism of algebraic patterns} from $\cat{O}$ to $\cat{P}$
  is an $(\infty,1)$-functor $\cat{O}\to\cat{P}$ which preserves
  active and inert morphisms and elementary objects.
\end{definition}

\begin{exmp}
  There is a functor $\bbDelta\to\bbOmega$ which, taking the standard
  skeleton of $\bbDelta$, sends $[n]$ to the linear tree with $n$
  nodes and $n+1$ edges. One may remark that it is fully faitful, and
  can also be identified with the canonical projection functor
  $\bbOmega_{/\edge}\to\bbOmega$. By translating the definition of
  inert and active morphisms for the given factorisation system on
  $\op{\bbDelta}$, one immediately sees that this functor induces
  morphisms of algebraic patterns
  ${\op{\bbDelta}}^{\natural}\to{\op{\bbOmega}}^{\natural}$ and
  ${\op{\bbDelta}}^{\flat}\to{\op{\bbOmega}}^{\flat}$.
\end{exmp}

\begin{definition}[Segal objects]
  Let $\cat{O}$ be an algebraic pattern. An $(\infty,1)$-category
  $\cat{C}$ is said to be \textbf{$\cat{O}$-complete} if it admits
  limits of diagrams with shape $\cat{O}^{\mathrm{el}}_{O/}$ for any
  $O\in\cat{O}$.

  Let $\cat{C}$ be an $\cat{O}$-complete $(\infty,1)$-category. A
  \textbf{Segal $\cat{O}$-object} in $\cat{C}$ is an
  $(\infty,1)$-functor $\func{X}\colon\cat{O}\to\cat{C}$ such that
  $\restr{\func{X}}_{\cat{O}^{\mathrm{inrt}}}$ is a lax extension of
  $\restr{\func{X}}_{\cat{O}^{\mathrm{el}}}$ (along the
  inclusion). Explicitly, this means that for any $O\in\cat{O}$ the
  canonical map
  \begin{equation}
    \label{eq:segal-cond-def-canon-map}
    \func{X}(O)\to\lim_{E\in\cat{O}^{\mathrm{el}}_{O/}}\func{X}(E)
  \end{equation}
  is invertible.

  The $(\infty,1)$-category $\seg{\cat{O}}(\cat{C})$ of Segal
  $\cat{O}$-objects in $\cat{C}$ is the full sub-$(\infty,1)$-category
  of $\funcs{O}{C}$ spanned by the Segal objects.
\end{definition}

\begin{exmp}
  \begin{itemize}
  \item A Segal ${\op{\bbGamma}}^{\flat}$-object is a commutative
    algebra object (or $\mathcal{E}_{\infty}$-algebra object).
  \item A Segal ${\op{\bbDelta}}^{\flat}$-object is an associative
    algebra object (or $\mathcal{A}_{\infty}$-algebra object, or
    $\mathcal{E}_{1}$-algebra), while a Segal
    ${\op{\bbDelta}}^{\natural}$-object is an internal category.
  \item A Segal ${\op{\bbOmega}}^{\flat}$-object is an internal
    monochromatic operad, while a Segal
    ${\op{\bbOmega}}^{\natural}$-object is an internal (coloured) operad.
  \end{itemize}
\end{exmp}

\begin{remark}
  Let $\cat{O}$ be an algebraic pattern such that the inclusion
  $\cat{O}^{\mathrm{el}}\hookrightarrow\cat{O}^{\mathrm{inrt}}$ is
  codense. Then for any $O\in\cat{O}$, the corepresentable
  $\yo_{\op{\cat{O}}}(O)\colon\cat{O}\to\infgrpds$ is a Segal
  $\cat{O}$-$\infty$-groupoid --- this is an immediate consequence of
  the limit-preservation property of hom $\infty$-functors. It should
  be viewed as the Segal object generated by $O$.
\end{remark}

\begin{exmp}
  Over $\op{\bbDelta}$, the Segal object $\yo{[n]}$ corresponds to the
  linear category $\mathbb{n+1}$ with $n$ successive arrows.
\end{exmp}

\begin{exmp}
  Over $\op{\bbOmega}$, the Segal object $\yo{\corol{n}}$ generated by
  the corolla with $n+1$ flags is also denoted $\corol{n}$ and called
  the corolla. It corresponds to the operad with $n+1$ colours
  $O_{1},\dots,O_{n+1}$ and, for each permutation
  $\sigma\in\mathfrak{S}_{n}$, a single operation of signature
  $(O_{\sigma(1)},\dots,O_{\sigma(n)};O_{n+1})$.
\end{exmp}

\subsection{Morphisms of algebraic patterns I: (weak) Segal
  fibrations}
\label{sec:morph-algebr-patt}

\begin{definition}[Segal morphism of algebraic patterns]
  A morphism of algebraic patterns $\func{F}\colon\cat{O}\to\cat{P}$
  is said to be a \textbf{Segal morphism} if ``it preserves Segal
  conditions'', that is if for any $\cat{P}$-complete
  $(\infty,1)$-category $\cat{C}$, the induced $(\infty,1)$-functor
  $\restr{\func{F}^{\ast}}_{\seg{\cat{P}}}\colon\seg{\cat{P}}(\cat{C})
  \subset\funcs{P}{C}\to\funcs{O}{C}$ factors through
  $\seg{\cat{O}}(\cat{C})\hookrightarrow\funcs{O}{C}$.
\end{definition}

\begin{remark}
  By~\cite[Lemma 4.5]{chu21:_homot_segal}, it is enough to check
  Segality of a morphism with $\cat{C}=\infgrpds$, that is to check
  preservation of Segal $\infty$-groupoids. Then the condition for
  $\func{F}$ to be a Segal morphism can be written in a formula as:
  for every $O\in\cat{O}$, for every Segal $\cat{P}$-$\infty$-groupoid
  $\func{X}$, the morphism of $\infty$-groupoids
  \begin{equation}
    \label{eq:segal-morph-def}
    \lim_{\cat{P}^{\mathrm{el}}_{\func{F}(O)/}}\func{X}\to
    \lim_{\cat{O}^{\mathrm{el}}_{O/}}\func{X}\circ\func{F}^{\mathrm{el}}
  \end{equation}
  induced by the $(\infty,1)$-functor
  $\cat{O}^{\mathrm{el}}_{O/}\to\cat{P}^{\mathrm{el}}_{\func{F}(O)/}$
  is an equivalence.
\end{remark}

\begin{construct}
  \label{construct:segal-mndl-cat-segal-fibrtn}
  Suppose $\func{V}\colon\cat{O}\to\infcats$ is an $\cat{O}$-monoidal
  $(\infty,1)$-category. Passing to the Grothendieck construction of
  the functor produces a cocartesian fibration
  $\grothco\func{V}\to\cat{O}$. We shall refer to a cocartesian
  fibration over $\cat{O}$ whose associated $(\infty,1)$-functor
  $\cat{O}\to\infcats$ satisfies the Segal conditions as a
  \textbf{Segal fibration} over $\cat{O}$.

  By~\cite[Proposition 2.1.2.5]{lurie17:_higher_algeb}, if
  $\grothco\func{V}\to\cat{O}$ is a Segal $\cat{O}$-fibration, the
  inert-active factorisation system on $\cat{O}$ lifts to one on
  $\grothco\func{V}$, endowing it with a structure of algebraic
  pattern. If $\cat{P}\to\cat{O}$ is another algebraic pattern over
  $\cat{O}$, we shall define a \textbf{$\cat{P}$-algebra} in
  $\grothco\func{V}$ to be a morphism of algbraic pattern over
  $\cat{O}$ from $\cat{P}$ to $\grothco\func{V}$.
\end{construct}

\begin{definition}[Weak Segal fibration]
  \label{def:wk-segal-fibrtn}
  Let $\cat{O}$ be an algebraic pattern. A \textbf{weak Segal
    $\cat{O}$-fibration} (also called $\cat{O}$-operad) is an
  $(\infty,1)$-functor $\func{p}\colon\cat{X}\to\cat{O}$ such that:
  \begin{enumerate}
  \item for every object $X\in\cat{X}$, every inert arrow
    $i\colon\func{p}X\to O$ in $\cat{O}$ admits a
    $\func{p}$-cocartesian lift $i_{!}\colon X\to i_{!}X$;
  \item for every object $O\in\cat{O}$, the $(\infty,1)$-functor
    $\cat{X}_{O}\to\lim_{E\in\cat{O}^{\mathrm{el}}_{O/}}\cat{X}_{E}$
    induced by the cocartesian morphisms over inert arrows is
    invertible;
  \item for every $X\in\cat{X}$ and every choice of
    $\func{p}$-cocartesian lift of the tautological diagram
    $\func{i}\colon\cat{O}^{\mathrm{el}}_{\func{p}X/}\to\cat{O}$ (of
    inert morphisms from $\func{p}X$) to an
    $\func{i}_{!}\colon(\cat{O}^{\mathrm{el}}_{\func{p}X/})^{\lhd}\to\cat{X}$
    taking the cone point to $X$, for every $Y\in\cat{X}$, the
    commutative square
    \begin{equation}
      \label{eq:sqr-weak-segl-fibr-pattrn}
      \begin{tikzcd}
        \cat{X}(Y,X) \arrow[r] \arrow[d] &
        \lim_{E\in\cat{O}^{\mathrm{el}}_{\func{p}X/}}\cat{X}(Y,\func{i}_{!}(E))
        \arrow[d] \\
        \cat{O}(\func{p}Y,\func{p}X) \arrow[r] &
        \lim_{E\in\cat{O}^{\mathrm{el}}_{\func{p}X/}}\cat{O}(\func{p}Y,\func{i}(E)=E)
      \end{tikzcd}
    \end{equation}
    is cartesian.
  \end{enumerate}
\end{definition}

\begin{exmp}
  \begin{itemize}
  \item A weak Segal ${\op{\bbGamma}}^{\flat}$-fibration is the
    $(\infty,1)$-category of operators of an $\infty$-operad in the
    sense of~\cite[Definition 2.1.1.10]{lurie17:_higher_algeb}, while
    a weak Segal ${\op{\bbGamma}}^{\natural}$-fibration is a
    generalised $\infty$-operad in the sense of~\cite[Definition
    2.3.2.1]{lurie17:_higher_algeb}.
  \item A weak Segal ${\op{\bbDelta}}^{\flat}$-fibration is the
    $(\infty,1)$-category of operators of a non-symmetric
    $\infty$-operad in the sense of~\cite[Definition 2.2.6, Definition
    3.1.3]{gepner15:_enric} while a weak Segal
    ${\op{\bbDelta}}^{\natural}$-fibration is virtual double
    $\infty$-category, or generalised $\infty$-operad
    in~\cite[Definition 2.4.1, Definition 3.1.13]{gepner15:_enric}.
  \end{itemize}
\end{exmp}

\begin{definition}[Morphisms of weak Segal fibrations]
  By~\cite{chu21:_homot_segal}, the source $\cat{X}$ of a weak Segal
  $\cat{O}$-fibration $\func{p}\colon\cat{X}\to\cat{O}$ inherits an
  algebraic pattern structure in which active morphisms are those
  lying over an active morphism in $\cat{O}$, inert morphisms are the
  $\func{p}$-cocartesian morphisms lying over inert arrows of
  $\func{O}$, and elementaries are the objects lying over elementary
  objects.

  The $(\infty,2)$-category of weak Segal $\cat{O}$-fibrations
  $\wkfibs{\cat{O}}$ is the locally full sub-$(\infty,2)$-category of
  $\infcats_{/\cat{O}}$ spanned by the weak Segal $\cat{O}$-fibrations
  and Segal morphisms thereof.
\end{definition}

It can be checked directly that any Segal fibration as
in~\cref{construct:segal-mndl-cat-segal-fibrtn} is in particular a
weak Segal fibration. In fact Segal $\cat{O}$-fibrations are exactly
those $(\infty,1)$-functors to $\cat{O}$ which are both weak Segal
fibrations and cocartesian fibrations. This provides a (non-full)
inclusion $(\infty,2)$-functor from the $(\infty,2)$-category of Segal
fibrations into that of weak Segal fibrations.

\subsection{Morphisms of algebraic patterns II: Combinatorics of
  enrichable structures}
\label{sec:morph-algebr-patt-2}

To finish this preliminary~\namecref{sec:revi-plus-constr}, we state
(a variant of) some definitions\footnote{which were presented in the
  seminar talk available at~\url{https://www.msri.org/seminars/25057}}
of the forthcoming paper~\cite{chu22:_enric}.

\begin{definition}[Combinatorial structure]
  A \textbf{combinatorial pattern} is an algebraic pattern $\cat{O}$
  equipped with a morphism
  $\uly{-}\colon\cat{O}\to{\op{\bbGamma}}^{\natural}$.

  When $(\cat{O},\uly{-})$ is a combinatorial pattern, we let
  $\cat{O}^{\flat}\to{\op{\bbGamma}}^{\flat}$ denote the base-change
  \begin{equation}
    \label{eq:combin-flat-alteration}
    \cat{O}^{\flat}\coloneqq
    \cat{O}\times_{{\op{\bbGamma}}^{\natural}}{\op{\bbGamma}}^{\flat}\text{.}
  \end{equation}
\end{definition}

Thanks to~\cite[Corollary 5.5]{chu21:_homot_segal}, we see that the
fibre product appearing in~\cref{eq:combin-flat-alteration} has a very
simple description: it consists of the category $\cat{O}$ equipped
with its same factorisation system, and the choice of only those
elementary objects living over $\langle1\rangle\in\op{\bbGamma}$
(\emph{i.e.}  excluding those over $\langle0\rangle$).

\begin{definition}[Enrichable pattern]
  A combinatorial structure $\uly{-}$ on an algebraic pattern
  $\cat{O}$ is \textbf{enrichable} if, for any $O\in\cat{O}$, the
  morphism
  $\cat{O}^{\flat,\mathrm{el}}_{O/}
  \to{\op{\bbGamma}}^{\flat,\mathrm{el}}_{\uly{O}/}$
  is an equivalence.
\end{definition}

\begin{remark}
  \label{remark:cartsn-pattrn-inrt-list}
  Viewing $\op{\bbGamma}$ as (the standard skeleton of) the category
  of pointed finite sets, one verifies that any
  $\langle n\rangle\in\op{\bbGamma}$ admits exactly $n$ inert
  morphisms to the unique elementary $\langle1\rangle$, the pointed
  morphisms $\rho_{i}\colon\langle n\rangle\to\langle1\rangle$ mapping
  $i$ to $1$ and every other element of $\langle n\rangle$ to the
  base-point. The condition of being an enrichable pattern then means
  that, for any $O\in\cat{O}$, the $(\infty,1)$-category
  $\cat{O}^{\flat,\mathrm{el}}_{O/}$ must be equivalent to the
  discrete set of the (essentially unique) lifts $\rho_{i,!}$ of the
  $\rho_{i}$. In particular, the $\cat{O}^{\flat}$-Segal condition for
  a precosheaf $\func{X}$ on $\cat{O}$ is constrained to being the
  finite product condition
  \begin{equation}
    \label{eq:segal-condtion-prod-cartesian-pattrn}
    \func{X}(O)\simeq\prod_{i=1}^{\uly{O}}\func{X}(\rho_{i,!}O)\text{.}
  \end{equation}
\end{remark}

\begin{exmp}
  The functor $\ast\to\op{\bbGamma}$ picking the object
  $\langle1\rangle$ defines a structure of enrichable pattern on the
  terminal algebraic pattern.
\end{exmp}

\begin{exmp}
  There is a functor $\op{\bbDelta}\to\op{\bbGamma}$ mapping $[n]$ to
  $\langle n\rangle$ and sending an arrow of $\op{\bbDelta}$
  corresponding to $\phi\colon[n]\to[m]$ in $\bbDelta$ to
  $\uly{\phi}\colon\langle m\rangle\to\langle n\rangle$ given by
  \begin{equation}
    \label{eq:cart-pattrn-delta}
    \uly{\phi}(i)=
    \begin{cases}
      j & \text{if $\phi(j-1)<i\leq\phi(j)$}\\
      \ast & \text{otherwise.}
    \end{cases}
  \end{equation}
  It can be checked directly that this is a structure of enrichable
  pattern on ${\op{\bbDelta}}^{\natural}$.
\end{exmp}

\begin{exmp}
  A functor $\op{\bbOmega}\to\op{\bbGamma}\simeq\cat{FinSet}_{\ast}$
  is defined in~\cite[Definition 4.1.16]{chu20:_enric} in the
  following way. A tree $T$ with set of vertices $V(T)$ is mapped to
  the freely pointed set $V(T)_{+}$. A morphism
  $T^{\prime}\leftarrow T$ in $\op{\bbOmega}$ is mapped to the pointed
  morphism $V(T^{\prime})_{+}\to V(T)_{+}$ which sends a vertex
  $v\in V(T^{\prime})$ to the unique vertex of $T$ whose image subtree
  contains $v$, or to the basepoint if there is no such vertex. By
  [loc. cit., Lemma 4.1.18], this functor preserves the inert-active
  factorisation system, and thus defines a morphism of algebraic
  patterns. One checks by inspection that it is an enrichable
  structure.
  
  In~\cref{pros:enrich-plus-constr} we will provide another
  construction of this enrichable structure from the point of view
  of $\bbOmega$ as obtained from the plus construction.
\end{exmp}

We recall from~\cite{lurie17:_higher_algeb} the definition of
semi-inert arrows in $\op{\bbGamma}$. A map of pointed finite sets
$f\colon(S,s_{0})\to(T,t_{0})$ is \textbf{semi-inert} if for any
$t\in T\setminus\{t_{0}\}$, there is at most one element in
$f^{-1}(t)$.

\begin{definition}
  Let $(\cat{O},\uly{\cdot})$ be a combinatorial algebraic pattern. An
  arrow $f$ of $\cat{O}$ is \textbf{semi-inert} if $\uly{f}$ is a
  semi-inert arrow of $\op{\bbGamma}$.
\end{definition}

\begin{exmp}
  In ${\op{\bbDelta}}^{\natural}$, the semi-inert morphisms are those
  corresponding to the cellular morphisms of $\bbDelta$ as defined
  in~\cite{haugseng17:_morit} and~\cite{haugseng16:_bimod}, that is
  maps of totally ordered sets $f\colon S\to S^{\prime}$ such that for
  all $s\in S$,
  $f(\operatorname{succ}s)\leq\operatorname{succ}(f(s))$.

  For the product pattern $({\op{\bbDelta}}^{\natural})^{n}$ equipped
  with
  $({\op{\bbDelta}}^{\natural})^{n}
  \xrightarrow{\uly{-}}({\op{\bbGamma}}^{\natural})^{n}
  \xrightarrow{\wedge}{\op{\bbGamma}}^{\natural}$, we also recover the
cellular morphisms of~\cite{haugseng17:_morit}, those maps all of
whose components are cellular in $\op{\bbDelta}$.
\end{exmp}

\begin{remark}
  As observed in~\cite{haugseng17:_morit}, an arrow
  $f\colon O\to O^{\prime}$ of $\cat{O}$ is semi-inert if and only if,
  for any elementary $E$ and any inert morphism
  $k\colon O^{\prime}\rightarrowtail E$, the composite
  $O\xrightarrow{k\circ f}E$ is semi-inert.
\end{remark}

\begin{exmp}
  \label{exmp:semi-inert-lifts-el}
  In $\op{\bbGamma}$, any object $\langle n\rangle$ admits precisely
  $n$ active morphisms from the unique elementary object
  $\langle1\rangle$, the functions
  $\lambda_{i}\colon\langle1\rangle \rightsquigarrow\langle n\rangle$
  sending $1\in\langle1\rangle$ to $i\in\langle n\rangle$ for any
  $1\leq i\leq n$. They are semi-inert. Likewise, the unique map
  $\langle0\rangle\to\langle n\rangle$ is also semi-inert.

  It follows that, in any combinatorial pattern, any active arrow from
  an elementary object is semi-inert.
\end{exmp}

\begin{definition}[Momentous pattern]
  A combinatorial algebraic pattern $(\cat{O},\uly{-})$ is
  \textbf{momentous} if any active morphism from a $\flat$-elementary
  admits an essentially unique inert retraction.
\end{definition}

Momentous patterns which are enrichable are closely related to (an
$\infty$-categorical version of) the hypermoment categories
of~\cite{berger22:_momen}.

\begin{exmp}
  \label{exmp:gamma-act-list-retrac}
  The pattern ${\op{\bbGamma}}^{\natural}$ itself is momentous: for
  any object $\langle n\rangle$, the function
  $\lambda_{i}\colon\langle1\rangle\to\langle n\rangle$
  of~\cref{exmp:semi-inert-lifts-el} admits as unique inert retraction
  $\rho_{i}$. In fact $\lambda_{i}$ is also the unique active section
  of $\rho_{i}$, so there is a one-to-one correspondence between inert
  morphisms to $\flat$-elementaries and (automaticaly semi-inert)
  active morphisms from $\flat$-elementaries.

  As a consequence, for any momentous enrichable pattern $\cat{O}$ and
  any object $O\in\cat{O}$, the (semi-inert) active morphisms from
  $\flat$-elementary objects to $O$ can be identified with a subset of
  $\cat{O}^{\mathrm{el}}_{O/}
  \simeq{\op{\bbGamma}}^{\natural,\mathrm{el}}_{\uly{O}/}$.
\end{exmp}

\section{The plus construction and its Segal objects}
\label{sec:plus-constr-Seg-obj}

\subsection{Categories of patterned trees}
\label{sec:categ-patt-trees}

The following construction is a variant of one due
to~\cite{barwick18:_from} in the setting of operator categories,
inspired by the plus construction or ``slice operads''
of~\cite{baez98:_higher_dimen_algeb_iii}, and which was also used
in~\cite{chu18:_two} and studied in~\cite{berger22:_momen} in the
setting of hypermoment categories --- as well
as~\cite{batanin21:_operad_koszul}, in a different form, for operadic
categories.

\begin{construct}
  Let $\cat{O}$ be a combinatorial algebraic pattern. Consider the
  restriction
  $\restr{\underline{\yo}_{\infcats}(\cat{O}^{\mathrm{act}})}_{\bbDelta}
  \colon[n]\mapsto\funcs{[\mathnormal{n}]}{\cat{O}^{\mathrm{act}}}$ to
  $\bbDelta\subset\infcats$ of the $(\infty,2)$-functor represented by
  $\cat{O}^{\mathrm{act}}$ and let
  $\bbDelta_{\cat{O}}^{\mathrm{pre}}\to\bbDelta$ be its Grothendieck
  construction. Note that $\bbDelta_{\cat{O}}^{\mathrm{pre}}$ is
  equivalent to $\bbDelta_{/\cat{O}^{\mathrm{act}}}$ --- where
  $\bbDelta$ is seen as a subcategory of $\infcats$ --- or also to
  $\bbDelta_{/\nerve\cat{O}^{\mathrm{act}}}$, where
  $\nerve\colon\infcats\hookrightarrow\prshvs{\bbDelta}$ is a nerve
  functor (for example incarnating $(\infty,1)$-categories as complete
  Segal $\infty$-groupoids, or as quasicategories, to the reader's
  preference). Thus an
  \begin{description}
  \item[object] of $\bbDelta_{\cat{O}}^{\mathrm{pre}}$ consists of a
    pair $([n],O_{\bullet})$ where $[n]\in\bbDelta$ and
    $O_{\bullet}\colon[n]\to\cat{O}^{\mathrm{act}}$ is a linear
    diagram in $\cat{O}^{\mathrm{act}}$, that is a sequence
    $O_{0}\rightsquigarrow O_{1}\rightsquigarrow\dots\rightsquigarrow
    O_{n}$ of active morphisms in $\cat{O}$, while a
  \item[morphism]
    $([n],O_{\bullet})\to([n^{\prime}],O^{\prime}_{\bullet})$ consists
    of a pair $(\phi,f_{\bullet})$ where
    $\phi\colon[n]\to[n^{\prime}]$ is a map in $\bbDelta$ and
    $f_{\bullet}\colon O_{\bullet}\Rightarrow
    O^{\prime}_{\phi(\bullet)}= O^{\prime}_{\bullet}\circ\phi$ is a
    natural transformation of $[n]$-shaped diagrams in
    $\cat{O}^{\mathrm{act}}$.
  \end{description}

  We define $\bbDelta_{\cat{O}}$ to be the wide and locally full
  (\emph{i.e.} containing all objects and all higher morphisms between
  a selection of $1$-morphisms) sub-$(\infty,1)$-category of
  $\bbDelta_{\cat{O}}^{\mathrm{pre}}$ on those morphisms
  $(\phi,f_{\bullet})$ such that
  \begin{itemize}
  \item $f_{\bullet}$ is component-wise semi-inert (in addition to
    active), that is each
    $f_{i}\colon O_{i}\rightsquigarrow O^{\prime}_{\phi(i)}$, for
    $i\in[n]$, is semi-inert, and
  \item $f_{\bullet}$ is a cartesian (or equifibred) natural
    transformation, that is for any morphism $i<j$ in $[n]$ the
    naturality square
    \begin{equation}
      \label{eq:nat-square-cart-trsfm-delta-pattrn}
      \begin{tikzcd}
        O_{i} \arrow[r,squiggly,"f_{i}"] \arrow[d,squiggly,"O_{i<j}"']
        &
        O^{\prime}_{\phi(i)} \arrow[d,squiggly,"O^{\prime}_{\phi(i<j)}"] \\
        O_{j} \arrow[r,squiggly,"f_{j}"'] & O^{\prime}_{\phi(j)}
      \end{tikzcd}
    \end{equation}
    is cartesian in $\cat{O}^{\mathrm{act}}$.
  \end{itemize}
\end{construct}

\begin{remark}
  \label{remark:morph-plus-constr-minl-data}
  Given fixed $\phi\colon[m]\to[n]$ in $\bbDelta$ and
  $([n],P_{\bullet})$ in $\bbDelta_{\cat{O}}$ over $[n]$, morphisms
  $(\phi,f_{\bullet})\colon([m],O_{\bullet})\to([n],P_{\bullet})$ in
  $\bbDelta_{\cat{O}}$ lifting $\phi$ are essentially determined, if
  they exist, by their underlying arrow
  $f_{m}\colon O_{m}\to P_{\phi(m)}$ at the terminal object
  $m\in[m]$. Indeed, for each $i\in[m]$ the object $O_{i}$ and arrow
  $f_{i}$ are required by the pullback condition
  in~\cref{eq:nat-square-cart-trsfm-delta-pattrn} to be the
  base-change of $O_{m}$ and $f_{m}$ along $i\leq m$.
\end{remark}

\begin{definition}
  Let $\cat{O}$ be a combinatorial algebraic pattern. The
  $(\infty,1)$-category $\bbDelta_{\cat{O}}$ is called the
  $(\infty,1)$-category of \textbf{$\cat{O}$-forests}.

  The full sub-$(\infty,1)$-category $\bbDelta^{(1)}_{\cat{O}}$ of
  $\bbDelta_{\cat{O}}$ on the $([n],O_{\bullet})$ such that
  $O_{n}\in\cat{O}^{\flat,\mathrm{el}}$ is called the
  $(\infty,1)$-category of \textbf{misodendric} $\cat{O}$-forests, or
  \textbf{$\cat{O}$-trees}.
\end{definition}

\begin{exmp}
  For the terminal algebraic pattern $\ast$, the
  $(\infty,1)$-categories of $\ast$-forests and of $\ast$-trees both
  recover the simplex category $\bbDelta$.
\end{exmp}

\begin{exmp}
  \label{exmp:plus-construct-gamma-trees}
  For ${\op{\bbGamma}}^{\flat}$, we obtain a category of forests,
  \emph{i.e.} disjoint unions of trees, with level structures, whose
  misodendric subcategory is identified in~\cite{berger22:_momen} with a
  full subcategory of $\bbOmega$. Indeed, recalling that the active
  subcategory of $\op{\bbGamma}$ is equivalent to the category of
  finite sets, the object $O_{n}$ is to be thought of as the set of
  roots of the forest, and each $O_{i}$ is the set of leaves at level
  $n-i$. The morphisms in $(\op{\bbGamma})^{\mathrm{act}}$ give
  partitions of the leaves at levels $\ell$ corresponding to the node
  (recognised by its unique output leaf at level $\ell+1$) to which
  they lead.
\end{exmp}

\begin{exmp}
  For ${\op{\bbDelta}}^{\flat}$, we similarly obtain a category of
  planar trees (or rather, forests thereof) with level structures.
\end{exmp}

\subsection{The pattern structure on the plus construction}
\label{sec:pattern-bbdelta_cato}

\begin{lemma}
  The $\infty$-functor
  $\func{d}_{\cat{O}}\colon\bbDelta_{\cat{O}}\subset
  \bbDelta^{\mathrm{pre}}_{\cat{O}}\to\bbDelta$ is a cartesian
  fibration.
\end{lemma}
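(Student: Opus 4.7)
The plan is to exhibit explicit cartesian lifts and check that they lie in $\bbDelta_{\cat{O}}$. First I observe that the projection $\bbDelta_{\cat{O}}^{\mathrm{pre}}\to\bbDelta$ is already a cartesian fibration by construction: being (equivalent to) the slice $\bbDelta_{/\cat{O}^{\mathrm{act}}}$, or the Grothendieck construction of the contravariant diagram $[n]\mapsto\funcs{[n]}{\cat{O}^{\mathrm{act}}}$, its cartesian morphisms over a map $\phi\colon[m]\to[n]$ with target $([n],P_{\bullet})$ are precisely those $(\phi,\alpha)$ in which $\alpha\colon Q_{\bullet}\Rightarrow P_{\bullet}\circ\phi$ is an equivalence in $\funcs{[m]}{\cat{O}^{\mathrm{act}}}$. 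A canonical such lift is then provided by $(\phi,\id)\colon([m],P_{\bullet}\circ\phi)\to([n],P_{\bullet})$.

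Next I would verify this canonical lift already lies in the subcategory $\bbDelta_{\cat{O}}$. Its natural-transformation component is an identity, whose pointwise components are identity arrows of $\cat{O}^{\mathrm{act}}$, and identities are semi-inert because their image in $\op{\bbGamma}$ is an identity map of pointed finite sets, which trivially satisfies $\lvert f^{-1}(t)\rvert\leq 1$. The naturality squares of an identity transformation are moreover commutative squares of identities, which are tautologically cartesian in $\cat{O}^{\mathrm{act}}$, so the equifibredness condition holds as well.

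It remains to transfer the cartesian property from $\bbDelta_{\cat{O}}^{\mathrm{pre}}$ to $\bbDelta_{\cat{O}}$. Given $\psi\colon[k]\to[m]$ in $\bbDelta$ and a morphism $(\phi\psi,g_{\bullet})\colon([k],Q_{\bullet})\to([n],P_{\bullet})$ in $\bbDelta_{\cat{O}}$, the factorisation obtained from cartesianity upstairs is simply $(\psi,g_{\bullet})\colon([k],Q_{\bullet})\to([m],P_{\bullet}\circ\phi)$, whose underlying natural-transformation data, pointwise components and naturality squares coincide literally with those of $(\phi\psi,g_{\bullet})$; so the semi-inert and cartesian conditions are inherited and this factorisation itself lies in $\bbDelta_{\cat{O}}$. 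The only real ``obstacle'' is this last bit of bookkeeping: since the cartesian lift is the trivial restriction of $P_{\bullet}$ along $\phi$, introducing no new data, both subcategory conditions transport automatically from morphisms above to their factorisations.
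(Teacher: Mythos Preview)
Your argument is correct and takes essentially the same approach as the paper: both exploit that $\bbDelta_{\cat{O}}^{\mathrm{pre}}\to\bbDelta$ is already a cartesian fibration (as a Grothendieck construction) with cartesian lift $(\phi,\id)\colon([m],P_{\phi(\bullet)})\to([n],P_{\bullet})$, then observe that the membership conditions for $\bbDelta_{\cat{O}}$ depend only on the natural-transformation component and hence are preserved and reflected by composition with this identity-component lift. The paper additionally records the full characterisation that an arrow of $\bbDelta_{\cat{O}}$ is $\func{d}_{\cat{O}}$-cartesian if and only if its transformation component is an equivalence, which your argument does not state explicitly but which follows immediately from it.
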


\begin{proof}
  We first observe as in~\cite{chu18:_two} that, due to the pullback
  condition in~\cref{eq:nat-square-cart-trsfm-delta-pattrn}, a
  morphism
  $(\phi,f_{\bullet})\colon([m],O_{\bullet})\to([n],P_{\bullet})$ in
  $\bbDelta_{\cat{O}}$ is $\func{d}_{\cat{O}}$-cartesian if and only
  if $f_{\bullet}$ is a natural equivalence. Indeed, to say that
  $(\phi,f_{\bullet})\colon([m],O_{\bullet})\to([n],P_{\bullet})$ is
  $\func{d}_{\cat{O}}$-cartesian is to say that
  \begin{equation}
    \label{eq:plus-construct-cart-fibr-def-square}
    \begin{tikzcd}[column sep=large]
      \bbDelta_{\cat{O},/([m],O_{\bullet})} \arrow[d]
      \arrow[r,"{(\phi,f_{\bullet})\circ-}"] &
      \bbDelta_{\cat{O},/([n],P_{\bullet})} \arrow[d] \\
      \bbDelta_{/[m]} \arrow[r,"\phi\circ-"'] & \bbDelta_{/[n]}
    \end{tikzcd}
  \end{equation}
  is a cartesian square.

  Let then $([k],L_{\bullet})$ be any object of $\bbDelta_{\cat{O}}$,
  and take the fibre of~\cref{eq:plus-construct-cart-fibr-def-square}
  at $([k],L_{\bullet})$, which we decompose as follows:
  \begin{equation}
    \label{eq:plus-construct-cart-fibr-test-square}
    \begin{tikzcd}
      \bbDelta_{\cat{O}}\bigl(([k],L_{\bullet}),([m],O_{\bullet})\bigr)
      \arrow[d] \arrow[r] &
      \bbDelta_{\cat{O}}\bigl(([k],L_{\bullet}),([n],P_{\bullet})\bigr)
      \arrow[d] \\
      \bbDelta_{\cat{O}}^{\mathrm{pre}}\bigl(([k],L_{\bullet}),([m],O_{\bullet})\bigr)
      \arrow[d] \arrow[r] \arrow[dr,phantom,very near
      start,"\lrcorner_{\text{ iff $f_{\bullet}$ invertible}}"] &
      \bbDelta_{\cat{O}}^{\mathrm{pre}}\bigl(([k],L_{\bullet}),([n],P_{\bullet})\bigr)
      \arrow[d] \\
      \bbDelta([k],[m]) \arrow[r] & \bbDelta([k],[n])\text{.}
    \end{tikzcd}
  \end{equation}

  Since $\bbDelta_{\cat{O}}^{\mathrm{pre}}\to\bbDelta$ is constructed
  --- and described --- as a Grothendieck construction, the lower
  square is cartesian if and only if $f_{\bullet}$ invertible. It is
  then enough to observe that the condition for a morphism in
  $\bbDelta^{\mathrm{pre}}_{\cat{O}}$ to be in $\bbDelta_{\cat{O}}$ is
  detected by postcomposition with a componentwise invertible morphism
  of $\bbDelta_{\cat{O}}$, which is clear.
  
  This characterisation now makes it easy to determine cartesian
  lifts. Let $\phi\colon[m]\to[n]$ be a morphism in $\bbDelta$, and
  let $([n],O_{\bullet})$ be a lift of $[n]$ in
  $\bbDelta_{\cat{O}}$. We define a cartesian lift of
  $([n],O_{\bullet})$ along $\phi$ to be $([m],O_{\phi(\bullet)})$
  with $(\phi,\id_{O_{\bullet}\circ\phi})$.
\end{proof}

\begin{corlr}
  \label{corlr:factor-syst-forest}
  Say that an arrow $(\phi,f_{\bullet})$ in $\bbDelta_{\cat{O}}$ is
  inert if $\phi$ is inert in $\bbDelta$ and active if $\phi$ is
  active in $\bbDelta$ and $f_{\bullet}$ is an equivalence. Then
  $((\op{\bbDelta_{\cat{O}}})^{\mathrm{inrt}},
  (\op{\bbDelta_{\cat{O}}})^{\mathrm{act}})$ defines a factorisation
  system on $\op{\bbDelta_{\cat{O}}}$.
\end{corlr}

\begin{proof}
  By~\cite[Proposition 2.1.2.5]{lurie17:_higher_algeb}, since
  $\func{d}_{\cat{O}}\colon\bbDelta_{\cat{O}}\subset
  \bbDelta^{\mathrm{pre}}_{\cat{O}}\to\bbDelta$ is a cartesian
  fibration, the factorisation system on its base can be lifted as
  required.
\end{proof}

\begin{definition}
  Let $\cat{O}$ be a combinatorial algebraic pattern. Its \textbf{plus
    construction} ${\op{\bbDelta_{\cat{O}}}}^{\natural}$ is the
  $(\infty,1)$-category $\op{\bbDelta_{\cat{O}}}$ equipped with the
  inert-active factorisation of~\cref{corlr:factor-syst-forest} and as
  elementary objects those $([n],O_{\bullet})$ with
  $[n]\in{\op{\bbDelta}}^{\natural,\mathrm{el}}$ (\emph{i.e.}  either
  $[0]$ or $[1]$) and $O_{n}\in\cat{O}^{\flat,\mathrm{el}}$.

  Its \textbf{misodendric plus construction} is the algebraic pattern
  induced on $\op{\bbDelta^{(1)}_{\cat{O}}}$.

  We shall say that an algebraic pattern is \textbf{well-directed} if
  it can be written as the plus construction of some combinatorial
  algebraic pattern.
\end{definition}

\begin{exmp}
  The main result of~\cite{chu18:_two} shows that
  ${\op{\bbDelta_{{\op{\bbGamma}}^{\natural}}}}^{\natural}$ is
  Morita-equivalent to ${\op{\bbOmega}}^{\natural}$ in the sense that
  their $(\infty,1)$-categories of Segal $\infty$-groupoids are
  equivalent. We will henceforth conflate operad objects with Segal
  ${\op{\bbDelta_{{\op{\bbGamma}}^{\natural}}}}^{\natural}$-objects.
\end{exmp}

\begin{scholium}
  \label{remark:segal-cond-plus-constr}
  The Segal condition for a Segal
  ${\op{\bbDelta_{\cat{O}}}}^{\natural}$-object $\func{X}$ can be
  explicitly written as the following set of conditions:
  \begin{description}
  \item[level decomposition]
    For any $([n],O_{\bullet})$, the canonical arrow
    \begin{equation}
      \label{eq:forest-segal-cond-lvl}
      \func{X}\bigl([n],O_{\bullet}\bigr)\to
      \func{X}
      \bigl([1],(O_{0}\rightsquigarrow
      O_{1})\bigr)\lmtimes_{\func{X}([0],(O_{1}))}\cdots
      \lmtimes_{\func{X}([0],(O_{n-1}))}
      \func{X}\bigl([1],(O_{n-1}\rightsquigarrow
      O_{n})\bigr) 
    \end{equation}
    is an equivalence.
  \item[root decomposition] For any height-$0$ forest of the form
    $([0],O_{0})$, the canonical map
    \begin{equation}
      \label{eq:forest-segal-cond-root}
      \func{X}\bigl([0],(O_{0})\bigr)\to
      \lim_{E\rightsquigarrow O_{0}}\func{X}\bigl([0],(E)\bigr)
    \end{equation}
    is an equivalence (where the limit is taken over
    $\cat{O}^{\mathrm{act}}_{\mathrm{el/O}}$)
  \item[shrub decomposition] For any
    $\bigl([1],(O_{0}\rightsquigarrow O_{1})\bigr)$, the canonical
    arrow
    \begin{equation}
      \label{eq:forest-segal-cond-shrub}
      \func{X}\bigl([1],(O_{0}\rightsquigarrow O_{1})\bigr)\to
      \lim_{E\rightsquigarrow O_{1}}\func{X}\bigl([1],(O_{0,E}\rightsquigarrow
      E)\bigr)
    \end{equation}
    is an equivalence, where $O_{0,E}$ is the fibre product
    $O_{0}\times^{\mathrm{act}}_{O_{1}}E$ --- with
    $\times^{\mathrm{act}}$ indicating that it is a fibre product in
    $\cat{O}^{\mathrm{act}}$.
  \end{description}
\end{scholium}

\begin{proof}
  The only part that bears commenting upon is the shrub
  decomposition. Recall that the inert maps in $\bbDelta_{\cat{O}}$
  are simply those whose projection to $\bbDelta$ is inert, so the
  relevant inert morphisms in $\bbDelta_{\cat{O}}$ are those from the
  elementaries over $[1]\in\bbDelta$. If we consider such a morphism,
  represented by
  \begin{equation}
    \label{eq:3}
    \begin{tikzcd}
      P \arrow[d,squiggly] \arrow[r,squiggly] & O_{0}
      \arrow[d,squiggly] \\
      E \arrow[r,squiggly] & O_{1}\text{,}
    \end{tikzcd}
  \end{equation}
  the equifibred condition implies that $P$ is the active pullback
  $O_{0}\times^{\mathrm{act}}_{O_{1}}E$.
\end{proof}

\begin{exmp}
  When $\cat{O}$ is ${\op{\bbGamma}}^{\natural}$, or more generally a
  momentous enrichable patter, the limits appearing in the root and
  shrub decompositions take on a simpler form: the root decomposition
  now becomes the condition that
  \begin{equation}
    \label{eq:gamma-forest-segal-cond-root}
    \func{X}\bigl([0],(O_{0})\bigr)\to
    \lim_{E\rightsquigarrow O_{0}}\func{X}\bigl([0],(E^{O_{0}}_{i})\bigr)
  \end{equation}
  be an equivalence (where $E^{O}_{i}\rightsquigarrow O$ is the unique
  lift of $O$ along the map $\lambda_{i}$
  of~\cref{exmp:gamma-act-list-retrac}, and we put a trivial factor if
  this lift is inexistent), and the shrub decomposition is the
  condition that
  \begin{equation}
    \label{eq:gamma-forest-segal-cond-shrub}
    \func{X}\bigl([1],(O_{0}\rightsquigarrow O_{1})\bigr)\to
    \prod_{i=1}^{\uly{O_{1}}}\func{X}\bigl([1],(O_{0,i}\rightsquigarrow
    E^{O_{1}}_{i})\bigr)
  \end{equation}
  be an equivalence, where $O_{0,i}$ is the fibre product
  $O_{0}\times^{\mathrm{act}}_{O_{1}}E^{O_{1}}_{i}$, as since
  $\cat{O}$ is momentous and enrichable, $E\rightsquigarrow O_{1}$
  must be one of the lifts $E^{O_{1}}_{i}$.
\end{exmp}

\begin{pros}[{\cite[Lemma 2.11]{chu18:_two}}]
  Forests and trees have the same Segal objects: the
  $(\infty,1)$-functor
  $\seg{{\op{\bbDelta_{\cat{O}}}}^{\natural}}(\infgrpds)
  \to\seg{{\op{\bbDelta^{(1)}_{\cat{O}}}}^{\natural}}(\infgrpds)$ of
  restriction along the inclusion
  $\bbDelta^{(1)}_{\cat{O}}\hookrightarrow\bbDelta_{\cat{O}}$ is an
  equivalence of $(\infty,1)$-categories.
\end{pros}

\begin{proof}
  Let $([n],O_{\bullet})$ be an $\cat{O}$-forest, and denote the
  decomposition of $\uly{O_{n}}$ into its fibres. We then write any
  forest as a union of trees, and use the forest decomposition
  condition of~\cref{remark:segal-cond-plus-constr}.
\end{proof}

\begin{construct}
  \label{construct:corollas-combin-str}
  We define a \textbf{corolla} of an $\cat{O}$-forest
  $F\in\bbDelta_{\cat{O}}$ to be an equivalence class of inert
  morphisms $F\to E$ where $E$ is elementary lying over
  $[1]\in\bbDelta$.
  
  We can now define a functor
  $\op{\uly{-}}\colon\bbDelta_{\cat{O}}\to\bbGamma$ by counting the
  numbers of corollas in an $\cat{O}$-forest, as in~\cite[Definition
  2.2.10]{chu20:_enric}.
\end{construct}

\begin{pros}
  \label{pros:enrich-plus-constr}
  The functor $\uly{-}\colon\op{\bbDelta_{\cat{O}}}\to\op{\bbGamma}$
  gives a structure of enrichable pattern on
  ${\op{\bbDelta_{\cat{O}}}}^{\natural}$.
\end{pros}

\begin{proof}
  By direct verification; this follows essentially by definition of
  corollas.
\end{proof}

\begin{pros}[{\cite[Theorem 10.16]{barwick18:_from}}]
  There is an equivalence of $(\infty,1)$-categories between
  ${\op{\bbDelta_{\cat{O}}}}^{\flat}$-Segal objects and weak Segal
  $\cat{O}$-fibrations.
\end{pros}

\begin{proof}[Proof (Sketch of construction)]
  The proof is a direct generalisation of that of~\cite[Theorem
  3.1.1]{hinich22:_luries}, which is quite technically involved, so we
  do not reproduce it here. However, since it will be useful later on,
  we recall the construction of the adjunction.

  We first explain what is, for our purposes, the most important part:
  how to get from Segal objects to weak fibrations.

  We start by constructing an $\infty$-functor
  $\omega^{\ast}\colon\prshvs{\bbDelta_{\cat{O}}}
  \to\prshvs{\bbDelta_{/\cat{O}}}$. Recall that presheaves on
  $\bbDelta_{\cat{O}}$ can be seen equivalently as presheaves on
  $\prshvs{\bbDelta_{\cat{O}}}$ satisfying the representability
  condition of being colimit-preserving. We now define an
  $\infty$-functor
  $\omega\colon\bbDelta_{/\cat{O}} \to\prshvs{\bbDelta_{\cat{O}}}$ as
  the composition $\func{i}^{\ast}\circ\yo_{\bbDelta_{/\cat{O}}}$ where
  $\func{i}^{\ast}$ is restriction (or inverse image) of presheaves
  along the canonical $\infty$-functor
  $\func{i}\colon\bbDelta_{\cat{O}}
  \hookrightarrow\bbDelta_{/\cat{O}^{\mathrm{act}}}
  \xrightarrow{(\cat{O}^{\mathrm{act}}\to\cat{O})_{!}}\bbDelta_{/\cat{O}}$.

  We thus get an inverse image $\infty$-functor
  $\omega^{\ast}\colon\prshvs{\prshvs{\bbDelta_{\cat{O}}}}
  \to\prshvs{\bbDelta_{/\cat{O}}}$, which we can restrict to the
  (full) sub-$(\infty,1)$-category of those presheaves which are
  representable and satisfy the Segal condition.  To finish, recall
  also (from~\cite[Corollary 5.1.6.12]{lurie09:_higher}) that
  $\prshvs{\bbDelta_{/\cat{O}}}\simeq{\prshvs{\bbDelta}}_{/\nerve\cat{O}}$,
  so we eventually have
  \begin{equation}
    \label{eq:1}
    \omega^{\ast}\colon\seg{{\op{\bbDelta_{\cat{O}}}}^{\natural}}(\infgrpds)
    \to{\prshvs{\bbDelta}}_{/\nerve\cat{O}}\text{.}
  \end{equation}

  In the case where $\cat{O}={\op{\bbGamma}}^{\natural}$, our functor
  $\omega$ corresponds to the one described
  in~\cite[\S{}5.1]{heuts16:_luries}. By~\cite[\S{}3.3]{hinich22:_luries},
  this functor does land first in the subcategory
  $\seg{{\op{\bbDelta}}^{\natural}}(\infgrpds)_{/\nerve\cat{O}}$ which
  we interpret as $\infcats_{/\cat{O}}$, and in fact in the further
  subcategory of those $\infty$-functors to $\cat{O}$ which are weak
  Segal fibrations.
  
  To go the other way, we consider
  $\varpi\colon\bbDelta_{\cat{O}}\xrightarrow{\yo}
  \prshvs{\bbDelta_{\cat{O}}} \to\wkfibs{\cat{O}}$ and the
  ``conjoint'' profunctor it corepresents: then $(\omega^{\ast})^{-1}$
  is the $\infty$-functor mapping $(\cat{E}\to\cat{O})$ to the
  presheaf
  $\wkfibs{\cat{O}}(\varpi(-),\cat{E})
  \colon\bbDelta_{\cat{O}}\to\infgrpds$.
\end{proof}

Note that --- again for $\cat{O}={\op{\bbGamma}}^{\natural}$ --- an
alternate proof, using the monoidal envelope $\infty$-functor, appears
in\cite[Corollary 4.2.8]{haugseng21}. However, this proof uses
crucially an explicit description of the Grothendieck construction of
$\func{Env}(\mathbb{1})$ seen as a Segal $\cat{O}$-$\infty$-category,
which do not have in general.

\section{Monoidal envelopes and Grothendieck opfibrations}
\label{sec:mono-envel-groth}

\subsection{Construction of the monoidal envelope functor}
\label{sec:constr-mono-envel}

As explained in the introduction, we wish to interpret the usual
formula computing monoidal envelopes of operads as a fibrewise oplax
extension along a certain functor appropriately mixing in
partitions. To define the necessary fibrewise extensions, we need to
briefly work in the generality of formal $\infty$-category theory
introduced in~\cite{riehl22:_elemen}, that is in the framework of
$\infty$-cosmoi, modelling the $(\infty,2)$-category of
$\infty$-categories.  Recall from~\cite[Proposition 9.1.8, Theorem
9.3.3]{riehl22:_elemen} that (pointwise) oplax extensions along an
$\infty$-functor can be expressed as colimits weighted by the conjoint
of this $\infty$-functor. We will define fibrewise extensions
similarly, replacing this conjoint by a relative version.

The reader who wishes to limit their intake of abstract nonsense may
use the formula of~\cref{eq:fibre-oplax-extn-formula} as a definition,
and take its functoriality properties on faith.

\begin{construct}[Relative comma $\infty$-category]
  In an $\infty$-cosmos $\cat{K}$, consider a cocorrespondence in the
  sliced $\infty$-cosmos $\cat{K}_{/B}$:
  \begin{equation}
    \label{eq:span-isofibr-slice-cosmos}
    \begin{tikzcd}
      & & \cat{E} \arrow[dd,two heads] \arrow[from=dll,"\func{f}"]
      \arrow[from=drr,"\func{g}"'] & & \\
      \cat{F} \arrow[drr,bend left,two heads,"\func{p}"'] & & & &
      \cat{G}
      \arrow[dll,bend right,two heads,"\func{q}"]  \\
      & & \cat{B} & &
    \end{tikzcd}
  \end{equation}

  We let $\func{f}\downarrow_{/\cat{B}}\func{g}$ denote the comma
  object in $\cat{K}_{/B}$, and call it the \textbf{relative comma
    $\infty$-category} over $B$.
\end{construct}

\begin{remark}
  By~\cite[Proposition 1.2.22, (iv)--(vi)]{riehl22:_elemen}, the
  relative comma $\infty$-category can be constructed as
  \begin{equation}
    \label{eq:rel-comma-formula-construct}
    \func{f}\downarrow_{/\cat{B}}\func{g}
    \simeq(\cat{F}\times_{\cat{B}}\cat{G})\lmtimes_{(\cat{E}\times_{\cat{B}}\cat{E})}
    (\cat{B}\times_{\funcs{\mathbb{2}}{B}}\funcs{\mathbb{2}}{E})
  \end{equation}
  (where the map $\cat{B}\to\funcs{\mathbb{2}}{B}$ is the
  diagonal). That is, informally, an object of
  $\func{f}\downarrow_{/\cat{B}}\func{g}$ consists of a triple
  $(F,G,\alpha)$ where $F$ and $G$ are objects of $\cat{F}$ and
  $\cat{G}$ respectively and $\alpha\colon \func{f}(F)\to\func{g}(G)$
  is an arrow in $\cat{E}$, such that all data live above the same
  object of $\cat{B}$ (\emph{i.e.} there is an object $B\in\cat{B}$
  and isomorphisms $\func{p}F\xrightarrow{\simeq}B$,
  $\func{q}G\xrightarrow{\simeq}B$, and
  $\alpha\xRightarrow{\simeq}\id_{B}$).
\end{remark}

\begin{lemma}
  The canonical projection
  $\func{f}\downarrow_{/\cat{B}}\func{g}\to\cat{F}\times\cat{G}$ is a
  discrete two-sided fibration in $\cat{K}$.
\end{lemma}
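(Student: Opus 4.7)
The plan is to derive the claim from the general cosmological fact, due to Riehl and Verity, that for any cospan $\func{f}\colon\cat{F}\to\cat{E}\leftarrow\cat{G}\colon\func{g}$ in an $\infty$-cosmos $\cat{L}$, the legs-projection $\func{f}\downarrow\func{g}\to\cat{F}\times\cat{G}$ from the comma object is a discrete two-sided fibration in $\cat{L}$. The key observation is that, by its very definition, $\func{f}\downarrow_{/\cat{B}}\func{g}$ is the comma object of $\func{f}$ and $\func{g}$ computed in the sliced $\infty$-cosmos $\cat{K}_{/\cat{B}}$, and in this sliced cosmos the binary product of $\cat{F}$ and $\cat{G}$ is realised by the fibre product $\cat{F}\times_{\cat{B}}\cat{G}$.

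Invoking the foregoing principle with $\cat{L}=\cat{K}_{/\cat{B}}$, I obtain that the canonical projection $\func{f}\downarrow_{/\cat{B}}\func{g}\to\cat{F}\times_{\cat{B}}\cat{G}$ is a discrete two-sided fibration in $\cat{K}_{/\cat{B}}$. The desired projection to $\cat{F}\times\cat{G}$ in $\cat{K}$ is then obtained by post-composition with the canonical inclusion $\cat{F}\times_{\cat{B}}\cat{G}\hookrightarrow\cat{F}\times\cat{G}$. To conclude, I would verify that a discrete two-sided fibration in $\cat{K}_{/\cat{B}}$ so composed remains a discrete two-sided fibration in the ambient cosmos $\cat{K}$: the defining conditions---cartesian, resp.~cocartesian, lifts of $1$-cells together with discreteness of fibres---are intrinsic to the $(\infty,2)$-categorical structure and transfer through the forgetful $(\infty,2)$-functor $\cat{K}_{/\cat{B}}\to\cat{K}$, with the lifting problems in the $\cat{F}$- and $\cat{G}$-directions that need to be solved being precisely those above arrows lying in the image $\cat{F}\times_{\cat{B}}\cat{G}$.

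The principal subtlety lies in this slice-to-ambient transition, and in ensuring that the interpretation of discreteness and two-sidedness is compatible with passing from $\cat{K}_{/\cat{B}}$ to $\cat{K}$. If preferred, one may circumvent it by a direct pullback-theoretic argument grounded in the explicit formula of the preceding remark: realising $\func{f}\downarrow_{/\cat{B}}\func{g}$ as an iterated pullback built from the source-target projection $\funcs{\mathbb{2}}{E}\to\cat{E}\times\cat{E}$---a prototypical discrete two-sided fibration---and exploiting the pullback-stability of this class of morphisms. Either route relies only on standard results and the structural formula already obtained, so the argument is ultimately a direct application of the cosmological calculus.
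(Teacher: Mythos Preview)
Your proposal is correct, and your second route---the direct pullback argument exploiting the explicit formula and the fact that $\funcs{\mathbb{2}}{E}\to\cat{E}\times\cat{E}$ is the prototypical discrete two-sided fibration, stable under pullback---is exactly what the paper intends: its entire proof is the single line ``As in~\cite[Proposition 7.4.6]{riehl22:_elemen}'', pointing to the argument in Riehl--Verity that establishes the analogous statement for ordinary comma objects. Your primary route via the sliced cosmos $\cat{K}_{/\cat{B}}$ is a mild detour; it works, but the slice-to-ambient transfer you flag as subtle is precisely the part that the direct pullback argument (and the paper's one-line reference) bypasses, so there is no real gain in going through the slice first.
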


\begin{proof}
  As in~\cite[Proposition 7.4.6]{riehl22:_elemen}.
\end{proof}

\begin{definition}[Fibrewise (op)lax extensions]
  Let $\cat{E}\xrightarrow{\func{p}}\cat{B}$ and
  $\cat{F}\xrightarrow{\func{q}}\cat{B}$ be two $\infty$-categories
  defined over a base $\cat{B}$, and let
  $\func{K}\colon\cat{E}\to\cat{F}$ be an $\infty$-functor defined
  over $\cat{B}$. Let $\func{D}\colon\cat{E}\to\cat{G}$ be an
  $(\infty,1)$-functor, so that we have the solid diagram
  \begin{equation}
    \label{eq:fibrwise-lax-extn-setup}
    \begin{tikzcd}[column sep=small]
      \cat{E} \arrow[ddr,bend right=5,"{\func{p}}"']
      \arrow[drr,"{\func{K}}"'] \arrow[rrrr,"{\func{D}}"] & & &
      & \cat{G} \\
      & & \cat{F} \arrow[dl,bend left,"{\func{q}}"] \arrow[urr,dashed]
      & & \\
      & \cat{B} & & &
    \end{tikzcd}
  \end{equation}

  A \textbf{fibrewise lax extension} of $\func{D}$ along $\func{K}$
  (relative to $\cat{B}$) is a limit
  \begin{equation}
    \label{eq:fibrwise-lax-extn-def}
    \laxext{\func{K}/\cat{B}}\func{D}
    \coloneqq\wtdlim{\func{K}^{/\cat{B}}_{\ast}}{\func{D}}
    \colon\cat{F}\to\cat{G}
  \end{equation}
  of $\func{D}$ weighted by the $\infty$-profunctor
  $\func{K}^{/\cat{B}}_{\ast}\colon\cat{E}\slashedrightarrow\cat{F}$
  associated with the relative comma $\infty$-category
  $\id_{\cat{F}}\downarrow_{/\cat{B}}\func{K}$.
  
  A \textbf{fibrewise oplax extension} of $\func{D}$ along $\func{K}$
  (relative to $\cat{B}$) is a colimit
  \begin{equation}
    \label{eq:fibrwise-oplax-extn-def}
    \oplaxext{\func{K}/\cat{B}}\func{D}
    \coloneqq\wtdcolim{\func{K}_{/\cat{B}}^{\ast}}{\func{D}}
    \colon\cat{F}\to\cat{G}
  \end{equation}
  of $\func{D}$ weighted by the $\infty$-profunctor
  $\func{K}_{/\cat{B}}^{\ast}\colon\cat{F}\slashedrightarrow\cat{E}$
  corresponding to the relative comma $\infty$-category
  $\func{K}\downarrow_{/\cat{B}}\id_{\cat{F}}$.
\end{definition}

\begin{remark}
  We have the explicit formulae, deduced from~\cite[Lemma
  9.5.5]{riehl22:_elemen}, computing fibrewise extensions: the
  fibrewise oplax extension of $\func{D}$ along $\func{K}$, evaluated
  at $F$, is
  \begin{equation}
    \label{eq:fibre-oplax-extn-formula}
    \oplaxext{\func{K}/\cat{B}}\func{D}(F)
    =\colim_{\substack{\func{K}(E)\to
        F\\E\in\cat{E}_{\func{q}(F)}}}\func{D}(E)\text{,} 
  \end{equation}
  and the fibrewise lax extension of $\func{D}$ along $\func{K}$,
  evaluated at $F$, is
    \begin{equation}
    \label{eq:fibre-lax-extn-formula}
    \laxext{\func{K}/\cat{B}}\func{D}(F)
    =\lim_{\substack{F\to\func{K}(E)\\E\in\cat{E}_{\func{q}(F)}}}\func{D}(E)\text{.}
  \end{equation}
\end{remark}

\begin{definition}[Envelope]
  Let $\func{X}\colon\op{\bbDelta_{\cat{O}}}\to\infgrpds$ be a
  precosheaf on $\op{\bbDelta_{\cat{O}}}$. Its \textbf{envelope} is
  the precosheaf
  \begin{equation}
    \label{eq:def-repr-mondl-envlp}
    \func{Env}(\func{X})
    =\func{i}^{\ast}\oplaxext{\func{i}/\op{\bbDelta}}\func{X}
  \end{equation}
  where $\func{i}^{\ast}$ denotes the $(\infty,1)$-functor of
  (fibrewise) restriction along
  $\func{i}\colon\op{\bbDelta_{\cat{O}}}
  \hookrightarrow\op{(\bbDelta_{\cat{O}}^{\mathrm{pre}})}$,
  right-adjoint to fibrewise oplax extension.
\end{definition}

\begin{remark}
  From the formula for fibrewise extensions
  (~\cref{eq:fibre-oplax-extn-formula}), we see that the value taken
  by the envelope of
  $\func{X}\in\seg{{\op{\bbDelta_{\cat{O}}}}^{\natural}}(\cat{C})$ at
  an $\cat{O}$-tree $T_{\bullet}$ of length $[n]$ is computed by
  \begin{equation}
    \label{eq:envelope-explicit-formula}
    \func{Env}(\func{X})(T_{\bullet})
    =\colim_{\substack{B_{\bullet}\rightsquigarrow
        T_{\bullet}
        \\\in((\bbDelta_{\cat{O}}^{\mathrm{pre}})_{[n]})_{/T_{\bullet}}}}
    \lim_{E\rightsquigarrow B_{n}}\func{X}(B_{\bullet,E})
  \end{equation}
  where $(B_{\bullet,E})_{E\rightsquigarrow B_{n}}$ denotes the forest
  decomposition in fibres (as in~\cref{eq:forest-segal-cond-shrub}),
  and where we denote the indexing arrows for the colimit as
  $\rightsquigarrow$ to emphasise that we view them as families of $n$
  active arrows of $\cat{O}$.
\end{remark}

\begin{remark}
  In the case $\cat{O}={\op{\bbGamma}}^{\flat}$, we may understand the
  envelope in the following way. Recall that an active morphism in
  $\op{\bbGamma}$ can be seen as giving a partition of its source
  indexed by its target (possibly with empty parts). Then the colimit
  creates several copies of $\func{X}(\corol{n})$, each equipped with
  a new partition specifying how to distribute its inputs.
\end{remark}

To work with envelopes in an intuitive way, it will then be convenient
to introduce the following terminology.

\begin{definition}[$\cat{O}$-Partitions]
  Let $\cat{O}$ be an algebraic pattern, and let $O$ and $P$ be
  objects of $\cat{O}$. An $\cat{O}$-partition of $O$ by $P$, or
  simply a \textbf{$P$-partition of $O$} when there is no ambiguity,
  is an active morphism $O\rightsquigarrow P$.
\end{definition}

\begin{exmp}
  \label{exmp:envelope-of-terminal}
  If $\mathbb{1}$ denotes the terminal
  ${\op{\bbDelta_{\cat{O}}}}^{\natural}$-Segal $\infty$-groupoid, then
  its envelope $\func{Env}(\mathbb{1})$ evaluates, on any forest
  $T_{\bullet}$ of length $n$, the the groupoidal localisation of
  $(\bbDelta_{\cat{O}}^{\mathrm{pre}})_{[n]/T_{\bullet}}$.
\end{exmp}

\begin{proof}
  By definition, $\func{Env}(\mathbb{1})(T_{\bullet})$ is given by the
  colimit
  \begin{equation}
    \label{eq:envlp-of-terminal}
    \func{Env}(\mathbb{1})(T_{\bullet})
    =\colim_{B_{\bullet}\rightsquigarrow
      T_{\bullet}}\mathbb{1}
  \end{equation}
  (where the last $\mathbb{1}$ is the terminal $\infty$-groupoid).

  Now $\func{Env}(\mathbb{1})(T_{\bullet})$ is an $\infty$-groupoid so
  it is equivalent to its localisation, and furthermore the
  localisation functor $\func{L}\colon\infcats\to\infgrpds$, being
  left-adjoint to the inclusion
  $\func{I}\colon\infgrpds\hookrightarrow\infcats$, preserves
  colimits, so we may compute the colimit on the right-hand side in
  $\infcats$ and then localise.
  
  As a colimit of a constant diagram of $(\infty,1)$-categories, the
  colimit in $\infcats$ is equivalent to the tensor of the constant
  value $\mathbb{1}$ by the indexing $(\infty,1)$-category
  $(\bbDelta_{\cat{O}}^{\mathrm{pre}})_{[n]/T_{\bullet}}$.

  From the adjunction $\func{L}\dashv\func{I}$ and the tensor-cotensor
  adjunction, we have that for any $(\infty,1)$-category $\cat{C}$ and
  any $\infty$-groupoids $\cat{G}$ and $\cat{H}$,
  \begin{equation}
    \label{eq:8}
    \infgrpds\bigl((\func{L}\cat{C})\otimes\cat{G},\cat{H}\bigr)
    \simeq\infgrpds\bigl(\func{L}\cat{C},\funcs{G}{H}\bigr)
    \simeq\infcats\bigl(\cat{C},\func{I}(\funcs{G}{H})\bigr)\text{.}
  \end{equation}
  Since functor $(\infty,1)$-categories between $\infty$-groupoids are
  automatically $\infty$-groupoids, $\func{I}(\funcs{G}{H})$ is also
  the cotensor of $(\infty,1)$-categories
  $\funcs{\func{I}G}{(\func{I}H)}$, and thus
  \begin{equation}
    \label{eq:9}
    \begin{split}
      \infcats\bigl(\cat{C},\func{I}(\funcs{G}{H})\bigr)
      &\simeq\infcats\bigl(\cat{C},\funcs{\func{I}G}{(\func{I}H)}\bigr)
      \simeq\infcats\bigl(\cat{C}\otimes\func{I}\cat{G},
      \func{I}\cat{H}\bigr)\\
      &\simeq\infgrpds\bigl(\func{L}(\cat{C}\otimes\func{I}\cat{G}),
      \cat{H}\bigr)\text{,}
    \end{split}
  \end{equation}
  which shows that
  $\func{L}(\cat{C}\otimes\func{I}\cat{G})\simeq
  (\func{L}\cat{C})\otimes\cat{G}$.

  Hence, in our case, we obtain --- now writing
  $\cat{C}[^{-1}]=\func{L}\cat{C}$ for the total localisation of an
  $(\infty,1)$-category $\cat{C}$ --- the tensor of $\infty$-groupoids
  $(\bbDelta_{\cat{O}}^{\mathrm{pre}})_{[n]/T_{\bullet}}[^{-1}]
  \otimes\mathbb{1}$, which is simply
  $(\bbDelta_{\cat{O}}^{\mathrm{pre}})_{[n]/T_{\bullet}}[^{-1}]$.
\end{proof}

\begin{definition}[Linearisable pattern]
  A collection $\{C_{i}\}_{i\in I}$ of objects in an
  $(\infty,1)$-category $\cat{C}$ is \textbf{mock multiterminal} if
  the projection $\cat{C}_{/\{C_{i}\}_{i}}\to\cat{C}$ is an
  equivalence.
  
  A combinatorial algebraic pattern
  $\cat{O}\xrightarrow{\uly{-}}{\op{\bbGamma}}^{\natural}$ is
  linearisable if the collection of $\flat$-elementary objects is mock
  multiterminal in $\cat{O}^{\text{act}}$.
\end{definition}

We note that while, obviously, if a collection $\{C_{i}\}_{i\in I}$ is
multiterminal, it is in particular mock multiterminal, this is is not
the only case: it can also happen if the only morphisms between
objects of $\{C_{i}\}_{i}$ are ismorphisms.

\begin{notation}
  When $\cat{O}$ is linearisable, we have a simple classification of
  the elementaries of ${\op{\bbDelta_{\cat{O}}}}^{\natural}$. Recall
  that those were defined to be the trees (\emph{i.e.} the forests
  whose final value is elementary in $\cat{O}^{\flat}$) of length
  $[0]$ and $[1]$.

  If $E_{\bullet}$ is of length $[0]$, it is simply given by an
  elementary object of $\cat{O}^{\flat}$, so we write those as
  $\edge_{E}$ with $E\in\cat{O}^{\flat,\mathrm{el}}$, and call them
  the \textbf{edges}.

  If $E_{\bullet}$ is of length $[1]$, it must be given by a diagram
  in $\cat{O}^{\mathrm{act}}$ of the form $O\rightsquigarrow E$ with
  $E\in\cat{O}^{\flat,\mathrm{el}}$. The linearisability condition
  ensures that for any given $O$, there is only essentially one such
  morphism, and so we denote the corresponding object as $\corol{O}$,
  and dub such elementaries the \textbf{corollas} (as they are the
  elementaries considered in~\cref{construct:corollas-combin-str}).
\end{notation}

\begin{pros}
  For any combinatorial pattern $\cat{O}$, the misodendric plus
  construction ${\op{\bbDelta_{\cat{O}}^{(1)}}}^{\natural}$ is
  linearisable.
\end{pros}

\begin{proof}
  Let $([n],O_{\bullet})$ be an object of $\bbDelta_{\cat{O}}$. By the
  condition for misodendric objects, $O_{n}$ must be in
  $\cat{O}^{\flat, \mathrm{el}}$. Recall
  from~\cref{remark:morph-plus-constr-minl-data} that a morphism to
  $([n],O_{\bullet})$ is determined by the data of a morphism to $[n]$
  in $\bbDelta$ and a (semi-inert and active) morphism to $O_{n}$.

  Now the elementaries of ${\op{\bbDelta_{\cat{O}}}}^{\flat}$ are of
  the form $\left([1],(E_{0}\rightsquigarrow E_{1})\right)$ with
  $E_{1}\in\cat{O}^{\flat, \mathrm{el}}$. There is only one active
  morphism $[1]\rightsquigarrow[n]$, which is specified by
  $(0<1)\mapsto(0<n)$. Finally, an active morphism in
  ${\op{\bbDelta_{\cat{O}}}}^{\flat}$ must have its morphism to
  $O_{n}$ be an isomorphism, so $E_{1}$ is also forced to be $O_{n}$
  (while $E_{0}$ was already seen to be equivalent to $O_{0}$).
\end{proof}

In particular, when $\cat{O}$ is linearisable,
evaluating~\cref{exmp:envelope-of-terminal} on the elementaries, we
find that:
\begin{itemize}
\item
  $\coprod_{E\in\cat{O}^{\flat,\mathrm{el}}}\func{Env}(\mathbb{1})(\edge_{E})$
  is the localisation of $\cat{O}^{\mathrm{act}}$;
\item for any corolla
  $C_{O}=\bigl([1],(O\rightsquigarrow E_{O})\bigr)$ (with $E_{O}$ the
  unique elementary receiving an active morphism from $O$), an object
  of $\func{Env}(\mathbb{1})(C_{O})$ consists of an active morphism
  $P_{0}\rightsquigarrow P_{1}$ of $\cat{O}$ --- a $P_{1}$-partition
  of $P_{0}$ --- along with an $O$-partition $P_{0}\rightsquigarrow O$
  (such that, necessarily, $E_{P_{1}}$ is also compatibly equivalent
  to $E_{O}$).
\end{itemize}

\subsection{Opfibrations and representable monoidality}
\label{sec:opfibr-repr-mono}

\begin{pros}
  For any Segal ${\op{\bbDelta_{\cat{O}}}}^{\natural}$-object
  $\func{X}$, its envelope $\func{Env}(\func{X})$ is a Segal
  ${\op{\bbDelta_{\cat{O}}}}^{\natural}$-object.
\end{pros}

\begin{proof}
  Let $T_{\bullet}$ be an object of $\bbDelta_{\cat{O}}$ of length
  $[n]$. The Segal condition requires comparing
  \begin{equation}
    \label{eq:enlvop-segal-test}
    \func{Env}(\func{X})(T_{\bullet})
    =\colim_{B_{\bullet}\to T_{\bullet}}\func{X}(B_{n})
    =\colim_{B_{\bullet}\to T_{\bullet }}\lim_{B_{\bullet}\to E}\func{X}(E_{\bullet})
  \end{equation}
  and
  \begin{equation}
    \label{eq:enlvop-segal-test-2}
    \lim_{T_{\bullet}\to E_{\bullet}}\func{Env}(\func{X})(E_{\bullet})
    =\lim_{T_{\bullet}\to E_{\bullet}}\colim_{B_{0,1}\to E_{\bullet}}\func{X}(B_{0,1})\text{.}
  \end{equation}
  Their equality is the condition of distributivity of limits over
  colimits as made explicit in~\cite[Definition
  7.12]{chu21:_homot_segal}, and using~\cite[Corollary
  7.17]{chu21:_homot_segal} which shows that $\infgrpds$ is admissible
  (or even any $(\infty,1)$-topos when the $\infty$-category of active
  maps from a given object of $\cat{O}$ to an elementary is finite).
  
  Alternatively, we can use the explicit form of the Segal conditions
  given in~\cref{remark:segal-cond-plus-constr}, which can be checked
  directly.
\end{proof}

For the cocartesian properties of the envelope, we now need to
specialise to the case where $\cat{O}$ is linearisable. Indeed,
following~\cite[Theorem 2.4]{hermida04:_fibrat}, we will characterise
cocartesian fibrations of Segal $\cat{P}$-objects (for
$\cat{P}={\op{\bbDelta_{\cat{O}}}}^{\natural}$ a well-directed
algebraic pattern) in terms of cocartesian fibrations of the
underlying $\cat{P}_{\cat{P}_{0}/}$-objects of their envelopes. This
presupposes having already a good understanding of cocartesian
fibrations of $\cat{P}_{\cat{P}_{0}/}$-objects, which is only the case
when $\cat{P}_{\cat{P}_{0}/}$ is ${\op{\bbDelta}}^{\natural}$, in
particular when $\cat{P}={\op{\bbOmega}}^{\natural}$ (and
$\cat{P}_{0}=\{\edge\}$).

\begin{remark}
  In a combinatorial pattern $\cat{O}$, the $\flat$-elementaries lie
  over $\langle1\rangle\in\op{\bbGamma}$. An active and semi-inert
  morphism to $\langle1\rangle$ can have as source only
  $\langle1\rangle$ or $\langle0\rangle$. So if $\cat{O}$ is
  linearisable, for any object $O$ the unique active morphism to an
  elementary will only be semi-inert in $\uly{O}$ is $\langle1\rangle$
  or $\langle0\rangle$, in which case it \emph{will} be so because
  there are no non-semi-inert morphisms
  $\langle0\rangle\to\langle1\rangle$ or
  $\langle1\rangle\to\langle1\rangle$.
\end{remark}

\begin{lemma}
  Suppose $\cat{O}$ is linearisable and $\cat{O}_{\langle1\rangle}$ is
  an $\infty$-groupoid. The slice $(\bbDelta_{\cat{O}})_{/\edge}$ is
  equivalent to $\bbDelta\times\cat{O}_{\langle1\rangle}$, itself
  equivalent to $\colim_{\cat{O}_{\langle1\rangle}}\bbDelta$.
\end{lemma}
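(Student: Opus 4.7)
The plan is to unpack morphisms in $\bbDelta_{\cat{O}}$ landing in an edge $\edge_E = ([0],(E))$, use the cartesian natural transformation condition to collapse the length direction, and then apply linearisability to identify the remaining data with $\cat{O}_{\langle 1\rangle}$. First, a morphism $(\phi, f_{\bullet})\colon ([n], O_{\bullet}) \to ([0], (E))$ consists of the unique $\phi\colon [n] \to [0]$ together with a componentwise-semi-inert cartesian natural transformation $f_{\bullet}\colon O_{\bullet} \Rightarrow (E)_{\phi(\bullet)}$ into the constant diagram at $E$. Applying the cartesianity axiom of~\cref{eq:nat-square-cart-trsfm-delta-pattrn} to naturality squares whose right-hand leg is $\id_E$ forces each structure map $O_{i<j}\colon O_i \to O_j$ to be an equivalence, so up to equivalence $O_{\bullet}$ is constant at some $O \in \cat{O}$, and $f_{\bullet}$ is determined by a single semi-inert active morphism $f\colon O \to E$.

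Next, I would invoke linearisability. The mock multiterminality of $\cat{O}^{\flat,\mathrm{el}}$ in $\cat{O}^{\mathrm{act}}$ says the projection $\cat{O}^{\mathrm{act}}_{/\cat{O}^{\flat,\mathrm{el}}} \to \cat{O}^{\mathrm{act}}$ is an equivalence, so the active map $f$ is essentially unique given $O$ and determines its target $\flat$-elementary up to equivalence. Combined with the hypothesis that $\cat{O}_{\langle 1\rangle}$ is an $\infty$-groupoid (identifying $\cat{O}^{\flat,\mathrm{el}}$ with $\cat{O}_{\langle 1\rangle}$, and making any semi-inert active morphism between such supports into an equivalence), I would deduce that the object data of $(\bbDelta_{\cat{O}})_{/\edge}$ reduces to a pair $([n], E)$ with $E \in \cat{O}_{\langle 1\rangle}$. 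I would then construct the equivalence functor $F\colon \bbDelta \times \cat{O}_{\langle 1\rangle} \to (\bbDelta_{\cat{O}})_{/\edge}$ sending $([n], E)$ to the constant length-$n$ diagram at $E$ with its canonical identity projection to $\edge_E$. Essential surjectivity follows from the analysis above, and~\cref{remark:morph-plus-constr-minl-data} together with the groupoid assumption reduces full faithfulness to matching hom-spaces: a morphism in the slice is a $\phi \in \bbDelta([n], [m])$ together with an equivalence $E \xrightarrow{\simeq} E'$ in $\cat{O}_{\langle 1\rangle}$, compatible with the projections to the edge.

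The second equivalence $\bbDelta \times \cat{O}_{\langle 1\rangle} \simeq \colim_{\cat{O}_{\langle 1\rangle}} \bbDelta$ follows from the general principle --- exactly the one exploited in the proof of~\cref{exmp:envelope-of-terminal} --- that the colimit in $\infcats$ of a constant diagram at an $\infty$-category $\cat{C}$ indexed by an $\infty$-groupoid $\cat{G}$ computes to the tensor $\cat{G} \otimes \cat{C} \simeq \cat{G} \times \cat{C}$. The hardest part of the argument will be the bookkeeping in the second step: one must carefully check that objects $O$ whose combinatorial image $\uly{O}$ is $\langle 0\rangle$ (which also admit a semi-inert active morphism to a $\flat$-elementary) do not produce extra components in the slice. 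The resolution should use the linearisability together with the groupoid hypothesis on $\cat{O}_{\langle 1\rangle}$ to show that in the slice such objects are absorbed via the essentially unique identification provided by mock multiterminality; making this precise is where the technical work lies.
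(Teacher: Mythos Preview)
Your approach is essentially the same as the paper's: use the equifibred condition on the transformation into a constant diagram to force $O_{\bullet}$ to be constant, then invoke linearisability together with the semi-inert constraint to pin down the remaining datum as an object of $\cat{O}_{\langle1\rangle}$. The paper's proof is considerably terser --- it simply cites the preceding remark to conclude that $O_{n}$ lies over $\langle1\rangle$ and that the active morphism to an elementary exists uniquely --- but the skeleton is identical.

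You are right to flag the $\langle0\rangle$ case as the delicate point. In fact the paper's own remark only constrains $\uly{O_{n}}$ to lie in $\{\langle0\rangle,\langle1\rangle\}$, yet the proof then asserts $O_{n}$ lies over $\langle1\rangle$ without further comment; so the concern you raise is not manufactured, and the paper does not resolve it any more explicitly than you do. Your proposed resolution via mock multiterminality and the groupoid hypothesis is plausible but, as you say, is where the actual work would need to be written out. Apart from this shared lacuna, your argument is correct and matches the paper's.
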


\begin{proof}
  Consider a morphism $T\to\edge$ from some tree
  $T=([n],O_{\bullet})$, consisting of a map $\phi\colon[n]\to[0]$ and
  a transformation $O_{\bullet}\to\phi^{\ast}E$ where $E$ is an
  elementary defining the edge $\edge_{E}=([0],(E))$. Now
  $\phi^{\ast}E$ is a constant diagram, so the equifibred condition
  for the transformation implies that all components of $O_{\bullet}$
  must be equal to $O_{n}$. By the previous remark, $O_{n}$ only
  admits a semi-inert active morphism to an $E$ if it lies over
  $\langle1\rangle$ (and it exists uniquely by the linearisability
  condition).
\end{proof}

By~\cite[Proposition 2.37]{barkan22:_arity_approx_operad}, there is a
canonical pattern structure on a coslice of a pattern. The equivalence
of categories underlies one of algebraic patterns.

\begin{remark}
  Since hom $\infty$-functors preserve limits, we have that
  $\seg{\colim_{\cat{O}_{\langle1\rangle}}{\op{\bbDelta}}^{\natural}}(\cat{C})
  \simeq\lim_{\cat{O}_{\langle1\rangle}}\seg{{\op{\bbDelta}}^{\natural}}(\cat{C})$. In
  other words, a Segal $\op{(\bbDelta_{\cat{O}})}_{\edge/}$-object can
  be seen as a family of category objects indexed by the
  $\infty$-groupoid $\cat{O}_{\langle1\rangle}$, which we shorten to
  \textbf{$\cat{O}_{\langle1\rangle}$-category}.
\end{remark}

\begin{definition}[Underlying $\cat{O}_{\langle1\rangle}$-category]
  Suppose $\cat{O}$ is linearisable and $\cat{O}_{\langle1\rangle}$ is
  an $\infty$-groupoid, and denote
  $\func{p}\colon\op{(\bbDelta_{\cat{O}})}_{\edge/}\to\op{\bbDelta_{\cat{O}}}$.
  The \textbf{underlying $\cat{O}_{\langle1\rangle}$-category}, or
  simply underlying category, object of a Segal
  ${\op{\bbDelta_{\cat{O}}}}^{\natural}$-object $\func{X}$ is its
  restriction along $\func{p}$.
\end{definition}

If $\func{F}$ is the name of a Segal
${\op{\bbDelta_{\cat{O}}}}^{\natural}$-object, we will generally
denote its underlying category object $\cat{F}$.

\begin{remark}
  The functor $\func{p}$ clearly admits unique liftings of inert
  morphisms, and it is extendable, so by~\cite[Proposition
  7.13]{chu21:_homot_segal} the restriction $\infty$-functor
  $\func{p}^{\ast}$ has a left-adjoint (given by oplax extension),
  which is fully-faithful, and has as its essential images the
  presheaves with empty value on any non-linear tree.
\end{remark}

\begin{definition}[Cocartesian fibration]
  A morphism $\func{X}\to\func{B}$ of Segal
  ${\op{\bbDelta_{{\op{\bbGamma}}^{\flat}}}}^{\natural}$-$\infty$-groupoids
  is a \textbf{cocartesian fibration of Segal objects} if
  $\cat{Env}(\func{X})\to\cat{Env}(\func{B})$ is a cocartesian
  fibration (of $(\infty,1)$-categories), where $\cat{Env}(\func{X})$
  denotes the underlying $(\infty,1)$-category of
  $\func{Env}(\func{X})$.

  A Segal object $\func{X}$ is \textbf{representably monoidal} if the
  unique morphism $\func{X}\to\mathbb{1}$ is a cocartesian fibration of
  Segal objects.
\end{definition}

\begin{pros}
  \label{pros:envlp-is-monoidal}
  For any Segal object $\func{X}$, $\func{Env}(\func{X})$ is
  representably monoidal.
\end{pros}

\begin{proof}
  Since families of cocartesian fibrations can be checked
  componentwise, we shall assume that $\cat{O}_{\langle1\rangle}$ is
  comprised of one single $\flat$-elementary object.
  
  Let $\phi\colon O\rightsquigarrow O^{\prime}$ be a morphism in
  $\cat{Env}(\mathbb{1})$, seen as an $\cat{O}$-partition of $O$ by
  $O^{\prime}$, and let $C\in\cat{Env}(\func{Env}(\func{X}))$ lying
  over $O$. Under the identification
  \begin{equation}
    \label{eq:6}
    \begin{split}
      \func{Env}\bigl(\func{Env}(\func{X})\bigr)(\edge)
      &\simeq\colim_{P\in\cat{O}^{\mathrm{act}}}
      \func{Env}(X)([0],P)\\
      &\simeq\colim_{P\in\cat{O}^{\mathrm{act}}}
      \colim_{Q\rightsquigarrow P} \lim_{E\rightsquigarrow
        Q}\func{X}(\edge)\text{,}
    \end{split}
  \end{equation}
  where the condition of ``lying over $O$'' means that the index $P$
  in the first colimit is fixed to be $O$, one passes to $\phi_{!}C$
  over $O^{\prime}$ by composing the maps $Q\rightsquigarrow O$ in the
  second colimit with $O\rightsquigarrow O^{\prime}$.

  For clarity, let us write it in the momentous enrichable case, where
  this simplifies to
  \begin{equation}
    \label{eq:5}
    \begin{split}
      \func{Env}\bigl(\func{Env}(\func{X})\bigr)(\edge)
      &\simeq\colim_{P\in\cat{O}^{\mathrm{act}}}
      \prod_{i=1}^{\uly{P}}\func{Env}(X)(\edge)\\
      &\simeq\colim_{P\in\cat{O}^{\mathrm{act}}}
      \prod_{i=1}^{\uly{P}}\colim_{Q_{i}\in\cat{O}^{\mathrm{act}}}
      \prod_{j=1}^{\uly{Q_{i}}}\func{X}(\edge)\text{,}
    \end{split}
  \end{equation}
  and so we write $C$ as
  \begin{equation}
    \label{eq:envlp-is-mondl-colr-dbl-envlp}
    (C_{1,1}\cdots C_{1,\uly{Q_{1}}})\cdots(C_{\uly{O},1}\cdots
    C_{\uly{O},\uly{Q_{\uly{O_{0}}}}})
  \end{equation}
  for some choice of $(Q_{i})_{i=1}^{\uly{O}}$.

  We set
  \begin{equation}
    \label{eq:envlp-is-mondl-colr-lift}
    \phi_{!}C=(C_{1,1}\cdots
    C_{n_{1},i_{n_{1}}})\cdots(C_{n_{m-1}+1,1}\cdots C_{n_{m},i_{n_{m}}})
  \end{equation}
  which lies over $\langle m\rangle=\uly{O^{\prime}}$, where
  $(n_{1},\dots,n_{m})$ is the partition of $n$ in $m$ furnished by
  $\uly{O\rightsquigarrow O^{\prime}}$.

  A morphism $C\to\phi_{!}C$ in $\cat{Env}(\func{Env}(\func{X}))$ is
  given by a moprhism
  $C_{1,1}\cdots C_{n_{m},i_{n_{m}}}\to C_{1,1}\cdots
  C_{n_{m},i_{n_{m}}}$ in $\cat{Env}(\func{X})$ along with a partition
  of $\sum_{k=1}^{m}i_{n_{k}}$ into $n$ parts. We define the lift
  $C\to\phi_{!}C$ of $\phi$ to be given by the identity arrow of
  $C_{1,1}\cdots C_{n_{m},i_{n_{m}}}$ along with the partition
  exhibited in~\cref{eq:envlp-is-mondl-colr-dbl-envlp}. This lift is
  cocartesian.
\end{proof}

Thus the construction $\func{X}\mapsto\func{Env}(\func{X})$ defines an
$(\infty,1)$-functor
$\func{Env}\colon\seg{{\op{\bbDelta_{\cat{O}}}}^{\natural}}(\infgrpds)
\to\cat{O}_{\langle1\rangle}\textnormal{-}\infmoncats$.

\begin{remark}[Monoidal structure on monoidal $\infty$-categories]
  \label{remark:product-in-mndl-oprd-cat}
  Using the idea from the proof of~\cref{pros:envlp-is-monoidal}, we
  can construct a product on the colours of a representably monoidal
  $\infty$-operad $\func{X}$. First note that (regardless of
  monoidality) the colours of $\func{Env}(\func{X})$ in the image of
  the unit map $\func{X}\to\func{Env}(\func{X})$ are exactly those
  whose image under the morphism
  $\func{Env}(\func{X})\to\func{Env}(\mathbb{1})$ is $1$.

  Consider an $n$-uple of colours of $\func{X}$, given by $n$
  morphisms $C_{1},\cdots,C_{n}\colon\yo\edge\to\func{X}$. Those define
  a morphism
  $(C_{1},\cdots,C_{n})\colon\yo\edge\to\func{Env}(\func{X})$, whose
  image lies over the colour $n$ of $\func{Env}(\mathbb{1})$. Now since
  $\func{X}$ is representably monoidal, the morphism $n\to1$ in
  $\func{Env}(\mathbb{1})$ has a cocartesian lift from
  $(C_{1},\cdots,C_{n})$, whose target is then a colour
  $C_{1}\otimes\cdots\otimes C_{n}$ of $\func{X}$. Clearly, the same
  construction can be applied to obtain a product of morphisms as
  well, with appropriate functoriality.
\end{remark}

\begin{thm}
  \label{thm:envlp-is-left-adj-free}
  The $(\infty,1)$-functor $\func{Env}$ is left-adjoint to the
  inclusion $\infmoncats\hookrightarrow\infoprds$.
\end{thm}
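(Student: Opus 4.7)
The plan is to exhibit the adjunction via its unit and counit, both extracted from the underlying presheaf-level adjunction $\oplaxext{\func{i}/\op{\bbDelta}}\dashv\func{i}^{\ast}$ which already expresses $\func{Env}=\func{i}^{\ast}\oplaxext{\func{i}/\op{\bbDelta}}$ as a monad on $\prshvs{\bbDelta_{\cat{O}}}$. The unit $\eta_{\func{X}}\colon\func{X}\to\func{Env}(\func{X})$ is then the restriction to Segal objects of the unit of this presheaf-level adjunction; evaluated at a tree $T_{\bullet}$, it is the structure map of the colimit~\cref{eq:envelope-explicit-formula} corresponding to the identity $T_{\bullet}$-partition of $T_{\bullet}$. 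That $\eta_{\func{X}}$ is a morphism of weak Segal fibrations is immediate from the Segal condition on $\func{Env}(\func{X})$ already verified.

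For the counit, given a representably monoidal $\func{Y}$, I would construct $\varepsilon_{\func{Y}}\colon\func{Env}(\func{Y})\to\func{Y}$ by leveraging the cocartesian lifts encoded in the monoidality hypothesis, following the idea of~\cref{remark:product-in-mndl-oprd-cat}. For any indexing partition $B_{\bullet}\rightsquigarrow T_{\bullet}$ in the colimit computing $\func{Env}(\func{Y})(T_{\bullet})$, the representable monoidality of $\func{Y}$ provides a cocartesian lift in $\cat{Env}(\func{Y})\to\cat{Env}(\mathbb{1})$ of the underlying active arrow of $\cat{Env}(\mathbb{1})$; this implements the tensor product contracting the fibres of $B_{\bullet}$ to those of $T_{\bullet}$, producing a canonical morphism $\func{Y}(B_{\bullet})\to\func{Y}(T_{\bullet})$. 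Assembling these across the indexing diagram by the universal property of the colimit yields the desired natural transformation $\varepsilon_{\func{Y}}$, and the construction, being by cocartesian lifts, automatically lands in $\infmoncats$ (i.e.\ $\varepsilon_{\func{Y}}$ preserves cocartesian lifts).

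Finally, I would verify the two triangle identities. For $\varepsilon_{\func{Env}(\func{X})}\circ\func{Env}(\eta_{\func{X}})=\id_{\func{Env}(\func{X})}$, one checks pointwise that the relevant cocartesian lifts identify the iterated double-colimit back with the single colimit defining $\func{Env}(\func{X})$; for $\func{i}^{\ast}\varepsilon_{\func{Y}}\circ\eta_{\func{Y}}=\id_{\func{Y}}$, it suffices that the cocartesian lift of the identity partition is the identity. The main obstacle is the construction of $\varepsilon_{\func{Y}}$: one has to verify that the pointwise cocartesian-lift maps $\func{Y}(B_{\bullet})\to\func{Y}(T_{\bullet})$ are natural in $B_{\bullet}$, i.e.\ functorial with respect to morphisms in the indexing $(\infty,1)$-category $(\bbDelta_{\cat{O}}^{\mathrm{pre}})_{[n]/T_{\bullet}}$. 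This reduces to the essential uniqueness of cocartesian lifts over composable active arrows of $\cat{Env}(\mathbb{1})$ together with the two-out-of-three property for cocartesian morphisms, but formalising this in the present Segal-object setting — where the cocartesian fibration $\cat{Env}(\func{Y})\to\cat{Env}(\mathbb{1})$ has to be unpacked through the envelope formula — will require some careful bookkeeping.
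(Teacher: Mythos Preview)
Your proposal is correct and follows essentially the same approach as the paper: construct the unit from the identity partition in the colimit defining $\func{Env}$, construct the counit componentwise from the cocartesian lifts (monoidal product) furnished by representable monoidality via \cref{remark:product-in-mndl-oprd-cat}, and then verify the triangle identities directly. Your discussion is in fact more careful than the paper's, which glosses over the naturality in $B_{\bullet}$ that you flag as the main technical point.
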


\begin{proof}
  By the construction of the envelope, we have a unit morphism
  $\eta_{\func{X}}\colon\func{X}\to\func{Env}(\func{X})$ for any
  $\infty$-operad $\func{X}$. We need to construct a counit
  $\varepsilon_{\func{V}}\colon\func{Env}(\func{V})\to\func{V}$ for
  any representably monoidal $\infty$-operad $\func{V}$, which is a
  morphism of monoidal $\infty$-categories.

  This morphism is provided by the construction
  of~\cref{remark:product-in-mndl-oprd-cat}: since $\func{V}$ is
  representably monoidal, it admits a monoidal product, and its
  envelope simply corresponds to adding a second level a
  parenthesising to the products.

  We will construct $\varepsilon_{\func{V}}$ componentwise, as a
  natural transformation of $\infty$-functors
  $\op{\bbDelta_{{\op{\Gamma}}^{\flat}}}\to\infgrpds$. Let
  $T_{\bullet}$ be a tree. By the
  formula~\cref{eq:envelope-explicit-formula}, giving a map
  $\func{Env}(\func{V})(T_{\bullet})\to\func{V}(T_{\bullet})$ is
  equivalent to giving, for each morphism
  $B_{\bullet}\rightsquigarrow T_{\bullet}$, a map
  $\func{X}(B_{\bullet})\to\func{X}(T_{\bullet})$. But recall that
  $B_{\bullet}\rightsquigarrow T_{\bullet}$ can be interpreted as a
  $T_{\bullet}$-partition of $B_{\bullet}$. Following the previous
  remark, we can use the product to simply reorganise the
  parenthesising levels according to the partition, which produces the
  desired morphism.

  Finally, it is directly checked that $\varepsilon$ and $\eta$
  satisfy the triangular equalities, so that they do exhibit an
  adjunction.
\end{proof}

\section{Cartesian monoidal structures and application to the
  Grothendieck construction}
\label{sec:cart-mono-struct-appl}

\subsection{Cartesian monoidal structures}
\label{sec:cart-mono-struct}

Defining the monoidal $(\infty,1)$-category associated with an
$(\infty,1)$-category admitting finite coproducts is easy enough:
by~\cite[Proposition 2.4.3.9]{lurie17:_higher_algeb}, the
$\infty$-functor realising this construction is right-adjoint to the
one taking the underlying $(\infty,1)$-category of a \emph{unital}
$\infty$-operad. For cartesian monoidal $\infty$-categories, however,
it is not so easy.

We follow here the idea used independently
in~\cite{barwick20:_spect_mackey_k_ii}
and~\cite{dyckerhoff19:_higher_segal_spaces} to define cartesian
monoidal $\infty$-categories: defining them not as $\infty$-operads,
but as ``$\infty$-antioperads'', the variant of $\infty$-operads whose
operations have one input but many outputs.

\begin{construct}[Anti-plus construction]
  We let $\overline{\bbDelta}_{\cat{O}}^{\mathrm{pre}}$ denote the
  Grothendieck construction of
  $\restr{\yo_{\infcats}(\cat{O}^{\mathrm{act}})}_{\bbDelta}\circ\op{(-)}$,
  and define a locally full sub-$(\infty,1)$-category
  $\overline{\bbDelta}_{\cat{O}}$ by imposing the same conditions as
  in the plus construction.
\end{construct}

\begin{scholium}
  There is an equivalence of $(\infty,1)$-categories between Segal
  ${\op{\overline{\bbDelta}_{\cat{O}}}}^{\natural}$-objects and
  $(\infty,1)$-functors $\func{p}\colon\cat{P}\to\op{\cat{O}}$ such
  that $\op{\func{p}}$ is a weak Segal fibration.
\end{scholium}

\begin{definition}[Reduced pattern]
  A combinatorial pattern $\cat{O}$ is \textbf{reduced} if
  $\cat{O}_{0}^{\mathrm{el}}$ is the terminal $\infty$-groupoid. We
  denote its (essentially) unique object $\zeroelem$.
\end{definition}

\begin{definition}[Unitality]
  Let $\cat{O}$ be a reduced pattern. A
  ${\op{\overline{\bbDelta}_{\cat{O}}}}^{\natural}$-object $\func{X}$
  is \textbf{anti-unital} if it is local with respect to the
  (op-)morphism $\corol{\zeroelem}\leftarrow\edge$.
\end{definition}

\begin{remark}
  The $(\infty,1)$-category of anti-unital Segal
  ${\op{\overline{\bbDelta}_{\cat{O}}}}^{\natural}$-objects in an
  $(\infty,1)$-category $\cat{C}$ is evidently a localisation of
  $\seg{{\op{\overline{\bbDelta}_{\cat{O}}}}^{\natural}}(\cat{C})$. It
  can also be seen a category of Segal objects for a different
  pattern.

  The pattern for anti-unital
  ${\op{\overline{\bbDelta}_{\cat{O}}}}^{\natural}$-objects is the
  same as ${\op{\overline{\bbDelta}_{\cat{O}}}}^{\natural}$, with
  $\corol{\zeroelem}$ removed from the elementaries.
\end{remark}

\begin{lemma}
  The restriction morphism
  $\seg{{\op{\overline{\bbDelta}_{\cat{O}}}}^{\natural,\mathrm{unit.}}}(\cat{C})
  \to\seg{{\op{\overline{\bbDelta}_{\cat{O}}}}^{\natural}_{\edge/}}(\cat{C})$
  (taking underlying $\infty$-categories) preserves Segal
  equivalences, and thus admits a right-adjoint (by lax extension).
\end{lemma}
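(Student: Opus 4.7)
The restriction morphism is induced by the coslice projection $\func{p}\colon\op{(\overline{\bbDelta}_{\cat{O}})}_{\edge/}\to\op{\overline{\bbDelta}_{\cat{O}}}^{\natural,\mathrm{unit.}}$, which I would first verify to be a morphism of algebraic patterns. The key point is to check that $\func{p}$ is a Segal morphism, i.e.\ that for any $T\in\op{\overline{\bbDelta}_{\cat{O}}}^{\mathrm{unit.}}$ the induced map of elementary slices is an equivalence. The elementaries over $T$ in the coslice are the inert lifts $\edge\rightarrowtail E$ of inert maps $T\rightarrowtail E$ to elementary $E$, and these are determined by the underlying elementary together with a compatible structure map from $\edge$; anti-unitality excludes $\corol{\zeroelem}$ (which admits no inert map from $\edge$), so nothing is lost. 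This reduces the Segality check to the criterion of \cite[Lemma 4.5]{chu21:_homot_segal}. Having a Segal morphism of patterns, $\func{p}^*$ restricts to a well-defined functor on Segal-object categories and so preserves equivalences between Segal objects.

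For the right adjoint, restriction between presheaf $\infty$-categories always admits a right adjoint $\func{p}_*$ given by pointwise right Kan extension---which is precisely the lax extension. To descend this to Segal objects, the natural strategy, dualising the oplax extension argument used earlier in the paper for the underlying-category restriction, is to apply the dual of~\cite[Proposition 7.13]{chu21:_homot_segal}: one verifies that $\func{p}$ lifts active morphisms essentially uniquely (immediate from the coslice construction, since a composite $\edge\to T'\rightsquigarrow T$ determines its factor $\edge\to T'$ essentially uniquely when $T'\rightsquigarrow T$ is active) and satisfies a coextendability condition readable off the explicit description of morphisms in~\cref{remark:morph-plus-constr-minl-data}. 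An alternative route is to observe that $\func{p}^*$ is a colimit-preserving functor between presentable $\infty$-categories (both Segal categories being accessible localisations of presheaf categories, on which $\func{p}^*$ preserves all colimits); the adjoint functor theorem then yields a right adjoint, which by uniqueness must agree with the lax Kan extension.

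The anti-unitality of $\func{p}_*\func{X}$ for any underlying-category input $\func{X}$ is automatic: the value at $\corol{\zeroelem}$ is a limit indexed by the space of lifts $\edge\to\corol{\zeroelem}$ in the base, which forces the value on $\edge$ to agree with the one on $\corol{\zeroelem}$, matching the anti-unital locality. The main obstacle I foresee is the precise articulation and verification of the coextendability condition dual to the one used for the oplax extension, as the inert/active symmetries of the pattern setting must be tracked carefully; once this is in hand, the identification of the right adjoint with lax Kan extension is entirely formal.
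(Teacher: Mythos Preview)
The paper states this lemma without proof, so there is no argument against which to compare your proposal. Your sketch is nonetheless in the spirit of the paper: the remark following the definition of underlying $\cat{O}_{\langle1\rangle}$-category (in the non-anti setting) invokes \cite[Proposition 7.13]{chu21:_homot_segal} to obtain a \emph{left} adjoint to the analogous restriction, and your plan to dualise that argument---checking unique lifting of active morphisms and a coextendability condition---is exactly the natural route to a right adjoint here. The adjoint-functor-theorem alternative you offer is also perfectly valid and arguably cleaner.

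Two minor points deserve care. First, ``preserves Segal equivalences'' most naturally means that restriction on \emph{all} precosheaves carries Segal-local equivalences to Segal-local equivalences, which is slightly stronger than the Segal-morphism condition you verify (that $\func{p}^{\ast}$ carries Segal objects to Segal objects); it is this stronger statement that directly yields a cocontinuous induced functor between the localisations, hence a right adjoint by the adjoint functor theorem. In practice the two are closely linked and your argument can be adapted. Second, your parenthetical that $\corol{\zeroelem}$ ``admits no inert map from $\edge$'' is not the operative reason it drops out: rather, $\corol{\zeroelem}$ has been \emph{removed from the elementaries} in the anti-unital pattern, so it simply does not appear in the elementary coslice regardless of which maps exist. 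With that corrected, your outline is sound.
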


We now give an alternate construction of the cartesian structure as a
${\op{\bbDelta_{\cat{O}}}}^{\natural}$-object, in the spirit
of~\cite{lurie17:_higher_algeb}.

\begin{construct}
  Let $\shname{U}_{\leq-1}$ be the subobject classifier
  ($(-1)$-truncated universe) in the $(\infty,1)$-topos
  $\prshvs{\bbDelta_{\cat{O}}}$. Define a presheaf
  $\widetilde{\func{Env}}(\cat{C}^{\times})$ over
  $\func{Env}(\mathbb{1})$ by the specification that
  $\hom(\shname{K},\widetilde{\func{Env}}(\cat{C}^{\times})) \simeq
  \hom(\cat{K}\times_{\cat{Env}(\mathbb{1})}\cat{Env}(\shname{U}_{\leq-1}),
  \cat{C})$ for any presheaf $\func{K}$. Note that the canonical
  (mono)morphism
  $\mathtt{true}\colon\mathbb{1}\rightarrowtail\shname{U}_{\leq-1}$
  induces a transformation
  $\widetilde{\func{Env}}(\cat{C}^{\times})\to\cat{C}$.

  When $\shname{K}$ is $\yo\edge$, and the transformation
  $\edge\to\func{Env}(1)$ selects an object $O$ of
  $\cat{O}^{\mathrm{act}}$, we obtain functors from the poset of
  subobjects of $O$ to $\cat{C}$. We define
  $\func{Env}(\cat{C}^{\times})$ to be the subfunctor of
  $\widetilde{\func{Env}}(\cat{C}^{\times})$ on the functors as above
  which exhibit their image as product of the elementaries in their
  source.
\end{construct}

\begin{pros}
  The structure morphism
  $\cat{Env}(\cat{C}^{\times})\to\cat{Env}(\mathbb{1})$ is the image
  of a cocartesian fibration of
  ${\op{\bbDelta_{\cat{O}}}}^{\natural}$-objects.
\end{pros}

\begin{proof}
  As in~\cite[Proposition 2.4.1.5.]{lurie17:_higher_algeb}.
\end{proof}

\subsection{Straightening cocartesian fibrations}
\label{sec:unstr-cocart-fibr}

\begin{definition}[Weak cartesian structure]
  Let $\func{X}$ be a Segal
  ${\op{\bbDelta_{\cat{O}}}}^{\natural}$-$\infty$-groupoid, and let
  $\cat{D}$ be a Segal
  ${\op{\bbDelta_{\cat{O}}}}_{\edge/}^{\natural}$-$\infty$-groupoid. A
  \textbf{lax cartesian structure} from $\func{O}$ to $\cat{D}$ is a
  morphism $\cat{Env}(\func{X})\to\cat{D}$ that takes decompositions
  to products.

  It is a \textbf{strong} cartesian structure if it induces an
  equivalence of underlying $(\infty,1)$-categories.
\end{definition}

\begin{lemma}
  The transformation $\cat{Env}(\cat{C}^{\times})\to\cat{C}$ is a
  strong cartesian structure.
\end{lemma}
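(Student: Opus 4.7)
The plan is to verify separately the two conditions defining a strong cartesian structure: first, the lax condition that the transformation sends decompositions to products, and second, the strong condition that it induces an equivalence of underlying $(\infty,1)$-categories.

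For the lax condition, I would unfold the defining subfunctor description of $\func{Env}(\cat{C}^{\times})\subset\widetilde{\func{Env}}(\cat{C}^{\times})$: a section of the latter over a test arrow $\shname{K}\to\func{Env}(\mathbb{1})$ is, by construction, an $\infty$-functor $\cat{K}\times_{\cat{Env}(\mathbb{1})}\cat{Env}(\shname{U}_{\leq -1})\to\cat{C}$, and the subfunctor $\func{Env}(\cat{C}^{\times})$ imposes that such a functor exhibit its value at each subobject as the product of its values at the elementaries comprising that subobject. The natural transformation induced by $\mathtt{true}\colon\mathbb{1}\rightarrowtail\shname{U}_{\leq -1}$ extracts the value at the top subobject, and by the product constraint this value is canonically a product over the component elementaries. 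Evaluated at a decomposition morphism $B_{\bullet}\rightsquigarrow T_{\bullet}$, this says the image in $\cat{C}$ is precisely the corresponding product diagram, which is the lax cartesian condition.

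For the strong condition, I would compute the underlying $\cat{O}_{\langle 1\rangle}$-category of $\cat{Env}(\cat{C}^{\times})$ via restriction along the canonical inclusion $\op{(\bbDelta_{\cat{O}})}_{\edge/}\hookrightarrow\op{\bbDelta_{\cat{O}}}$. By the earlier identification of $(\bbDelta_{\cat{O}})_{/\edge}$ with $\bbDelta\times\cat{O}_{\langle 1\rangle}$, every forest entering this restriction has underlying diagram constant at $\langle 1\rangle\in\op{\bbGamma}$, so the corresponding object of $\cat{Env}(\mathbb{1})$ consists of a single elementary. The subfunctor condition for $\func{Env}(\cat{C}^{\times})$ then trivialises: the value at the top subobject agrees tautologically with the value at this single elementary, and the $\mathtt{true}$-induced map is componentwise the identity. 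Naturality across morphisms in $\bbDelta\times\cat{O}_{\langle 1\rangle}$ promotes this levelwise identity to an equivalence with $\cat{C}$.

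The main obstacle will be giving a manageable concrete description of the pullback $\cat{K}\times_{\cat{Env}(\mathbb{1})}\cat{Env}(\shname{U}_{\leq -1})$ for probing objects $\cat{K}$: one must identify subobjects of a representable in $\cat{Env}(\mathbb{1})$ with sub-collections of the component elementaries, so as to articulate the product-of-elementaries condition on the right indexing data. Once this identification is pinned down using the explicit description of $\func{Env}(\mathbb{1})$ on elementaries from the previous subsection, both verifications are essentially mechanical unwrappings of the construction.
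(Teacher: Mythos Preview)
The paper's own proof is a bare citation to Lurie's \emph{Higher Algebra}, Proposition~2.4.1.5, so your sketch is necessarily more detailed than what appears there. Your plan for the lax condition is sound: the subfunctor condition defining $\func{Env}(\cat{C}^{\times})\subset\widetilde{\func{Env}}(\cat{C}^{\times})$ is precisely the demand that the value at each subobject be the product over its constituent elementaries, so extracting the top value via $\mathtt{true}$ does send decompositions to products. (Your phrasing ``evaluated at a decomposition morphism $B_{\bullet}\rightsquigarrow T_{\bullet}$'' slightly muddies this---the lax condition concerns how objects of $\cat{Env}(\func{X})$ lying over a non-elementary $O$ of $\cat{Env}(\mathbb{1})$ relate to their elementary pieces, not active morphisms of forests---but the underlying idea is right.)

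The strong-condition argument, however, contains a genuine confusion. You restrict $\func{Env}(\cat{C}^{\times})$ along $\op{(\bbDelta_{\cat{O}})}_{\edge/}\hookrightarrow\op{\bbDelta_{\cat{O}}}$ and then assert that over a linear tree ``the corresponding object of $\cat{Env}(\mathbb{1})$ consists of a single elementary.'' That is not so: the paper's computation of the envelope of the terminal object gives $\func{Env}(\mathbb{1})(\edge_{E})$ as the groupoidal localisation of all of $\cat{O}^{\mathrm{act}}$, not merely its elementaries. You are conflating the \emph{shape} of a linear tree (a constant $[n]$-diagram at an elementary of $\cat{O}$) with the \emph{values} $\func{Env}(\mathbb{1})$ takes on that shape (which range over all of $\cat{O}^{\mathrm{act}}$). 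Consequently $\cat{Env}(\cat{C}^{\times})$ is not equivalent to $\cat{C}$: its objects are finite tuples of objects of $\cat{C}$, indexed by arbitrary $O\in\cat{O}^{\mathrm{act}}$. The phrase ``underlying $(\infty,1)$-categories'' in the strong condition must instead be read, following Lurie, as the fibre of $\cat{Env}(\cat{C}^{\times})\to\cat{Env}(\mathbb{1})$ over the elementary objects of $\cat{O}^{\mathrm{act}}$; it is for \emph{that} fibre that the subobject poset collapses to a single point and the $\mathtt{true}$-induced map to $\cat{C}$ becomes an equivalence. Your trivialisation argument is correct once applied there, but as written it is aimed at the wrong target.
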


\begin{proof}
  As in~\cite[Proposition 2.4.1.5.]{lurie17:_higher_algeb}.
\end{proof}

\begin{pros}
  There is an equivalence of $(\infty,1)$-categories between lax
  cartesian structures from $\func{X}$ to $\cat{D}$ and morphisms
  $\func{X}\to\cat{D}^{\times}$.
\end{pros}

\begin{proof}
  As in~\cite[Proposition 2.4.1.7]{lurie17:_higher_algeb}.
\end{proof}

\begin{remark}
  Consider a lax cartesian structure
  $\varphi\colon\cat{Env}(\func{X})\to\cat{D}$ in the case where
  $\cat{D}$ is $\infcats$ (or $\infgrpds$). It is classified by a
  cocartesian fibration
  $\func{p}\colon\Phi\coloneqq\grothco\varphi\to\cat{Env}(\func{X})$. We
  will say that a cocartesian fibration over $\cat{Env}(\func{X})$ is
  \textbf{lax cartesian} if the $\infty$-functor
  $\cat{Env}(\func{X})\to\infcats$ that it classifies is a lax
  cartesian structure.
\end{remark}

\begin{lemma}
  A cocartesian fibration over $\func{Env}(\func{X})$ is lax cartesian
  monoidal if and only if it is in the essential image of the functor
  $\func{Env}$.
\end{lemma}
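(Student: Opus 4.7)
The plan is to combine straightening/unstraightening of cocartesian fibrations of $(\infty,1)$-categories with the previous proposition identifying lax cartesian structures out of $\cat{Env}(\func{X})$ with morphisms of Segal objects out of $\func{X}$. Straightening gives a correspondence between cocartesian fibrations $\Phi \to \cat{Env}(\func{X})$ and $\infty$-functors $\varphi\colon \cat{Env}(\func{X}) \to \infcats$, and by the preceding remark the condition of being lax cartesian monoidal on the fibration side is precisely the condition that $\varphi$ be a lax cartesian structure. Thus the whole statement reduces to identifying lax cartesian structures $\varphi$ with morphisms $\func{Y}\to\func{X}$ via $\func{Env}$.

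For the ``if'' direction, I would take a cocartesian fibration of the form $\cat{Env}(\func{Y}) \to \cat{Env}(\func{X})$, coming from a morphism $\func{Y} \to \func{X}$ of Segal objects, and show its straightening is a lax cartesian structure. The key step is to compute the fibre of $\cat{Env}(\func{Y}) \to \cat{Env}(\func{X})$ over a tuple of colours $(C_{1}, \dots, C_{n})$ using~\cref{eq:envelope-explicit-formula}: once a decomposition in $\cat{Env}(\func{X})$ is fixed, the colimit over partitions on the $\cat{Env}(\func{Y})$-side collapses to those partitions compatible with the chosen one, and splits as the product of the fibres of $\func{Y}(\edge) \to \func{X}(\edge)$ over each $C_{i}$. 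This splitting is exactly the ``takes decompositions to products'' condition.

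For the ``only if'' direction, given $\varphi\colon\cat{Env}(\func{X}) \to \infcats$ lax cartesian, the previous proposition furnishes a morphism of $\infty$-operads $\func{X} \to \infcats^{\times}$, which via the usual Grothendieck construction for $(\infty,1)$-categories unstraightens to a Segal object $\func{Y}$ together with a map $\func{Y} \to \func{X}$. Applying $\func{Env}$ to this morphism yields a candidate cocartesian fibration $\cat{Env}(\func{Y}) \to \cat{Env}(\func{X})$ in the essential image. By the ``if'' direction already established, this fibration is lax cartesian, and its classifying lax cartesian structure corresponds under the previous proposition to the same morphism $\func{X} \to \infcats^{\times}$ from which we started; by the fully faithfulness part of that correspondence we conclude that $\cat{Env}(\func{Y})$ straightens to $\varphi$, hence is equivalent to the original fibration.

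The main obstacle lies in the fibre computation in the ``if'' direction: one has to unwind the indexing $(\infty,1)$-category of the colimit~\cref{eq:envelope-explicit-formula} finely enough to separate the combinatorial degrees of freedom introduced by partitioning from the ``operadic'' data of $\func{Y}$ over the individual colours, so as to exhibit the product decomposition cleanly. The combinatorics are essentially those invoked in Lurie's proof of~\cite[Proposition 2.4.1.7]{lurie17:_higher_algeb}, and transpose without difficulty to the well-directed setting.
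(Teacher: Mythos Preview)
Your approach diverges from the paper's, and the ``only if'' direction as written has a circularity problem.

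The paper does not argue either implication directly. Instead it invokes the characterisation, due to Haugseng and to Barkan--Haugseng--Steinebrunner, of the essential image of $\func{Env}$ (on morphisms) as precisely those cocartesian fibrations over $\func{Env}(\func{X})$ which are \emph{equifibred} over $\func{Env}(\mathbb{1})$. The lax cartesian condition --- that the straightening sends decompositions to products --- is then read off as a reformulation of equifibredness. So the paper's proof is a one-line appeal to an external result identifying the image combinatorially, sidestepping any need to produce a preimage $\func{Y}$ by hand.

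Your ``if'' direction is reasonable in outline and close in spirit to verifying equifibredness by hand. The problem is in your ``only if'' direction. You write that a morphism $\func{X}\to\infcats^{\times}$ ``via the usual Grothendieck construction for $(\infty,1)$-categories unstraightens to a Segal object $\func{Y}$ together with a map $\func{Y}\to\func{X}$''. But there is no ``usual'' Grothendieck construction that takes a morphism of Segal ${\op{\bbDelta_{\cat{O}}}}^{\natural}$-objects with target $\infcats^{\times}$ and returns a cocartesian fibration of Segal objects: establishing exactly that equivalence is the content of the \emph{corollary that immediately follows this lemma}, and that corollary is proved by combining this lemma with the previous proposition. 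So as stated your argument is circular. If instead you mean to import the unstraightening from Lurie's model of $\infty$-operads, you would have to pass through the comparison equivalence with weak Segal fibrations, carry the unstraightening across, and then argue that the result agrees with what $\func{Env}$ produces --- none of which is indicated, and all of which is considerably more work than the equifibredness criterion the paper actually uses.
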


\begin{proof}
  By~\cite{haugseng21} or~\cite{barkan22:_envel_algeb_patter}, a
  cocartesian fibration over $\func{Env}(\func{X})$ is in the image of
  $\func{Env}$ if and only if it is equifibred (or cartesian) as a
  natural transformation.
\end{proof}

We thus obtain the following (un)straightening correspondence.

\begin{corlr}
  For every Segal ${\op{\bbDelta_{\cat{O}}}}^{\natural}$-$\infty$-groupoid
  $\func{X}$, there is an equivalence of $(\infty,1)$ categories
  between cocartesian fibrations over $\func{X}$ and morphisms
  $\func{X}\to\infcats^{\times}$.\qed{}
\end{corlr}

\begin{remark}
  The straightening/unstraightening equivalence for Segal
  ${\op{\bbDelta_{\cat{O}}}}^{\natural}$-$\infty$-groupoids takes as
  input the one for
  ${\op{\bbDelta_{\cat{O}}}}_{\eta/}^{\natural}$-$\infty$-groupoids. This
  is reminiscent of the straightening correspondence for Segal
  ${\op{(\bbDelta^{n})}}^{\natural}$-$\infty$-groupoids
  of~\cite{nuiten21:_segal} which is inductive on $n$.
\end{remark}

\printbibliography

~

{\footnotesize 
\textsc{David Kern, IMAG,
  Université de Montpellier, CNRS, Montpellier, France}

\textit{Email address}:
\href{mailto:david.kern@umontpellier.fr}{\texttt{david.kern@umontpellier.fr}}

\emph{URL}:
\href{https://dskern.github.io/}{\texttt{https://dskern.github.io/}}}

\end{document}